\newcommand{\p}{\mathbb{P}}
\newcommand{\F}{\mathbb{F}}
\newcommand{\lra}{\longrightarrow}
\newcommand{\A}{\mathcal{A}}
\newcommand{\ff}{\mathcal{F}}
\newcommand{\fq}{\mathbb{F}_q}
\newcommand{\fqm}{\mathbb{F}_{q^m}}
\newcommand{\fqu}{\mathbb{F}_{q^u}}
\newcommand{\fqr}{\mathbb{F}_{q^r}}
\newcommand{\fqc}{\overline{\mathbb{F}}_q}
\newcommand{\D}{\mathcal D}
\newcommand{\xx}{\mathcal X}
\newcommand{\yy}{\mathcal{Y}}
\newcommand{\divi}{\text{div}}
\DeclareMathOperator{\sign}{sign}
\theoremstyle{plain}
\newtheorem{thm}{Theorem}[section]
\newtheorem{defi}[thm]{Definition}
\newtheorem{prop}[thm]{Proposition}
\newtheorem{cor}[thm]{Corollary}
\newtheorem{rem}[thm]{Remark}
\newtheorem{ex}[thm]{Example}
 \font\numberfont= pzcmi scaled
\titleformat{\chapter}[display]
  {\normalfont\Large 
  }
  {
   \filright
   \rule[32pt]{.7\linewidth}{4pt}
   \hspace{-8pt}
   \shadowbox{
   \begin{minipage}{.15\linewidth}
     \begin{center}
          \textsl{\bf {\large \chaptertitlename}}\\
       \vspace{1ex}
       {\bf {\numberfont \thechapter}}\\
       \vspace{1ex}
     \end{center}
   \end{minipage}}
  }
  {-10pt}
  {\filcenter
           \sl
           \bf
              \Huge
     }
  [\vspace{-1cm}\singlespacing\hfill\rule{.8\textwidth}{0.5pt}\\
\vskip-2.8ex\hfill\rule{.7\textwidth}{4pt}\onehalfspacing\vspace*{-1ex}]
\titlespacing{\chapter}{0pt}{*4}{*1}
\titleformat{\section}[block]
{\normalfont\bfseries} {\thesection}{0.5em}{}
\titleformat{\subsection}[block]
{\normalfont\large\bfseries} {\thesubsection}{0.5em}{}
\numberwithin{equation}{section}
\begin{document}

\title{Bounds for the number of points on curves over finite fields}

\author{\textbf{Nazar Arakelian} \\
 \small{CMCC, Universidade Federal do ABC, Santo Andr\'e, Brazil} \\
 \textbf{Herivelto Borges}\\
 \small{ICMC, Universidade de S\~ao Paulo, S\~ao Carlos, Brazil}}

\maketitle
 \begin{abstract}
 
Let $\xx$ be a projective irreducible nonsingular algebraic curve defined over a finite field $\fq$.  This paper
presents a variation of the  St\"ohr-Voloch  theory and sets new bounds to the number of $\fqr$-rational points
on $\xx$. In certain cases, where comparison is possible, the results are shown to  improve other bounds such as Weil's,  St\"ohr-Voloch's and Ihara's. 
\end{abstract}

\section {Introduction}

The problem of estimating the number of points on curves over finite fields has engendered a host of  applications. Around 1980, Goppa presented a remarkable application of the theory of curves over finite fields, particularly those with many points, to coding theory. Elliptic-curve cryptography, created by Koblitz and Miller around 1986, is  another notable application. Additional connections can be found in other areas such as finite geometry, combinatorics, and number theory.
%

Let $\xx$ be a projective geometrically irreducible nonsingular  algebraic curve of genus $g$ defined over a finite field $\fq$, and let  $N_r$  denote its  number of $\fqr$-rational points. A celebrated result estimating  $N_r$ is the Weil bound
\begin{equation}\label{hasse-weil}
N_r\leq1+q^r+2g\sqrt{q^r}.
\end{equation}

There are many examples of curves attaining Weil's bound. These are called maximal curves, and they are a significant subject in mathematical research. Nevertheless, in several  cases, Weil's bound has been improved.  In this regard,   noteworthy results were presented by St\"ohr and Voloch in 1986. Their more geometric approach provides bounds  dependent on certain data of an embedding of the curve in $\mathbb{P}^n$. Their method not only proves Weil's bound once again, but offers substantial improvements on it in many circumstances. Accordingly, over the last few decades, researchers have used St\"ohr-Voloch results as an effective tool in addressing a variety of problems related to curves over finite fields.

A fundamental idea in  the St\"ohr-Voloch  method is counting  the number of points $P\in \xx $ whose Frobenius image $P^q \in \xx$ lies on the osculating hyperplane at $P$. The focus of this parper is a natural variation of their approach, which is counting the number  points $P\in \xx$ for which  the line  spanned  by two different  Frobenius images  $P^{q^u}$and $ P^{q^m}$ intersects the $(n-2)$-th  osculating space at $P$. 
The core of the resuts here  paralell those in the original St\"ohr-Voloch paper \cite{SV}. Our bounds, however, will  also  take into account  points defined over exensions of $\fq$, namely, $\fqu$, $\fqm$ and $\F_{q^{m-u}}$ (see Theorem \ref{mainbound}). Such  bounds are  effective in many settings, as will be seen in  Section \ref{examples}.  Camparison with
other bounds  such as Weil's, St\"ohr-Voloch's and Ihara's bound  is sometimes possible and,  in certain cases,   yields   substantial improvement.

This paper is organized as follows. In Section 2, some notation and preliminary results are presented. 
In Section $3$, for any  given pair of  coprime  integers $u$ and $m$, with $m>u\geq 1$, and a nondegenrated  $\F_q$-morphism
$\xx \xrightarrow{\phi } \mathbb{P}^{n}$, we associate an effective  divisor $T_{u,m}$ on $\xx$. Such divisor, which depends only on 
$q$, $u$, $m$ and the  linear series associated to $\phi$, collects the  $\fqr$-rational points on $\xx$,  for $r\in \{1,u,m,m-u\}$.
Section $4$ describes the core of the study.  The weight of each type of point  on the divisor $T_{u,m}$ is estimated and  the principal result  is established.  Some arithmetic properties of a sequence $(\kappa_0,\ldots,\kappa_{n-2})$, that will naturally arise from the data $q$, $u$, $m$ and $\phi$,  are examined. Among other results,   criteria for such sequence to be the classical sequence $(0,\ldots,n-2)$ is provided.
Section $5$ provides examples related to the new bounds and compares the  results with those in the literature, such  as Weil's, St\"ohr-Voloch's and Ihara's bound.
\text{}\\

\textbf{Notation}
\text{}\\
The following notation will be used throughout the paper.

\begin{itemize}
\item $\fq$ is the finite field with $q=p^h$ elements, where $p$ is  a prime, and $\fqc$ denotes its algebraic closure.
\item For an irreducible curve $\yy$ defined over $\fq$ and an algebraic extension $\mathbb{H}$ of $\fq$, the function field of $\yy$ over $\mathbb{H}$ is denoted by $\mathbb{H}(\yy)$.
\item By a point $P \in \yy$, we mean a point in a nonsingular model of $\yy$. More precisely, the points of $\yy$ will be regarded as branches over $\fqc$. 
\item For a curve $\yy$ and $r>0$, the set of its $\F_{q^r}$-rational points is denoted by $\yy(\F_{q^r})$.
\item $N_{r}(\yy)$ is the number of $\F_{q^r}$-rational points of the curve $\yy$. If no confusion arises, $N_r(\yy)$ will be simply denoted by  $N_r$. 
\item For $P \in \yy$, the discrete valuation at $P$ is denoted by $v_P$.
\item Given $g \in \fqc(\xx)$, $t$ a separating variable of $\fqc(\xx)$ and $r \geq 0$, the $r$-th Hasse derivative of $g$ with respect to $t$ is denoted by  $D_t^{(r)}g$.

\end{itemize}


\section{Preliminary results}

Let $\xx$ be a projective geometrically irreducible nonsingular algebraic curve of genus $g$ defined over $\fq$. Associated with the nondegenerated morphism $\phi=(f_0:\cdots:f_n): \xx \lra \p^{n}(\fqc)$, there exists a base-point-free linear series on  $\xx$ of dimension $n$ given by
$$
\D=\left\{\divi\left(\displaystyle\sum_{i=0}^{n}a_if_i\right)+E \ | (\ a_0:\ldots:a_n) \in\mathbb{ P}^n(\fqc)\right\},
$$
where $E:=\sum\limits_{P \in \xx}e_PP$ and $e_P=-min\{v_P(f_0),\ldots,v_P(f_n)\}$. Conversely, each base-point-free linear series   of dimension $n$  on  $\xx$ gives rise to a unique (up to projective transformation) nondegenerated morphism $\xx \lra \p^n(\fqc)$. The degree of the linear series $\D$ is the degree of the divisor $E$. For  each point  $P \in \xx$, there exists a sequence of non-negative integers $j_0(P),\ldots,j_n(P)$, with  $j_0(P)<\cdots<j_n(P)$, called order-sequence at $P$ with respect to $\D$. The integers $j\geq0$ defining this sequence are those  for which  $v_P(D)=j$ for some $D \in \D$, and they are called $(\D,P)$-orders. Since  $\D$ is  base-point-free,   $j_0(P)=0$ for all $P \in \xx$.  When there is no risk of confusion, $j_i$ denotes  $j_i(P)$. Let $L_i(P)$ be the intersection of all hyperplanes in $\p^n(\fqc)$ that intersect $\xx$ at  $P$ with multiplicity at least $j_{i+1}$. The space $L_i(P)$ is called the $i$-th osculating space at $P$. The $(n-2)$-th osculating spaces  will be  of particular interest in  the next section, and so will be the following, as  proved in \cite[Theorem 1.1]{SV}. 

\begin{thm}\label{SV1.1}
Let $P \in \xx$ and $t$ be a local parameter at $P$. Suppose that $e_P=0$. Assume that the first $i$ $(\D,P)$-orders $j_0,\ldots,j_{i-1}$ are known. Then $j_i$ is the smallest integer such that the points $((D_t^{(j_s)}f_0)(P): \cdots:(D_t^{(j_s)}f_n)(P))$ with $s=0,\ldots,i$ are linearly independent, and $L_i(P)$ is spanned by these points.
\end{thm}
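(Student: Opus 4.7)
The plan is to translate the vanishing-order condition $v_P(D) \geq j$ into a linear-algebraic rank condition on the matrix of Hasse derivatives at $P$, and then to read off $j_i$ from the jumps of that rank function. The hypothesis $e_P = 0$ is what lets this translation be clean.

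Concretely, I would begin by writing $V = \text{span}_{\fqc}(f_0, \ldots, f_n)$, so that, since $e_P = 0$, the $(\D, P)$-orders are exactly the values $v_P(g)$ for $g \in V \setminus \{0\}$. Using the Taylor expansion with respect to the local parameter $t$, one has, near $P$,
$$
g = \sum_{r \geq 0} (D_t^{(r)} g)(P)\, t^r,
$$
so for $g = \sum_{\ell=0}^{n} a_\ell f_\ell$ the condition $v_P(g) \geq j$ is equivalent to
$$
\sum_{\ell=0}^{n} a_\ell (D_t^{(r)} f_\ell)(P) = 0 \quad \text{for all } r = 0, 1, \ldots, j-1.
$$
Defining the $j \times (n+1)$ matrix $M_j$ with rows $\big((D_t^{(r)} f_0)(P), \ldots, (D_t^{(r)} f_n)(P)\big)$ for $r = 0, \ldots, j-1$, the space $V_j := \{g \in V : v_P(g) \geq j\}$ satisfies $\dim V_j = (n+1) - \rank(M_j)$.

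From this I would observe that $j$ is a $(\D, P)$-order iff $V_j \neq V_{j+1}$, iff $\rank(M_{j+1}) > \rank(M_j)$, iff the row indexed by $r = j$ is linearly independent from the rows indexed by $r < j$. Now, knowing $j_0 < \cdots < j_{i-1}$ means exactly that among the rows with indices $0, 1, \ldots, j_{i-1}$, a basis of the row span is given by the $i$ rows indexed by $j_0, \ldots, j_{i-1}$; every other row of index $\leq j_{i-1}$ is dependent on these. By definition of the order sequence, no index strictly between $j_{i-1}$ and $j_i$ contributes an independent row. Therefore $j_i$ is the smallest integer $j$ such that the rows of $M$ indexed by $j_0, j_1, \ldots, j_{i-1}, j$ are linearly independent, which is exactly the first claim when these rows are viewed as the projective points in the statement.

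For the description of $L_i(P)$, I would use the projective duality: a hyperplane $H:\sum a_\ell x_\ell = 0$ meets $\xx$ at $P$ with multiplicity $v_P(\sum a_\ell f_\ell)$ (using again $e_P = 0$), so $H$ contains $L_i(P)$ iff $v_P(\sum a_\ell f_\ell) \geq j_{i+1}$, which by the calculation above is iff $(a_0, \ldots, a_n)$ annihilates each of the rows indexed by $r = j_0, \ldots, j_i$. Taking intersections, $L_i(P)$ is precisely the projective span of the $i+1$ points $\big((D_t^{(j_s)} f_0)(P) : \cdots : (D_t^{(j_s)} f_n)(P)\big)$, $s = 0, \ldots, i$. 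The main subtlety is ensuring the bookkeeping between the vanishing order of $g \in V$ and the intersection multiplicity $I(H, \xx; P)$ is correct, and this is precisely where $e_P = 0$ gets used; once that is nailed down, the rest is elementary linear algebra.
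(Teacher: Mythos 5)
Your proof is correct and follows essentially the same route as the source: the paper itself does not reprove this statement (it is quoted from St\"ohr--Voloch, Theorem 1.1), and the argument there is precisely your translation of $v_P\bigl(\sum_\ell a_\ell f_\ell\bigr)\ge j$ into a rank condition on the matrix of Hasse derivatives via the local expansion $g=\sum_{r\ge 0}(D_t^{(r)}g)(P)\,t^r$ (valid because $e_P=0$ makes all $f_\ell$ regular at $P$), followed by the dual description of $L_i(P)$ as the common zero locus of the hyperplanes meeting $\xx$ at $P$ with multiplicity at least $j_{i+1}$. No gaps.
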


All but finitely  many points $P\in \xx$  have the same  order-sequence (see e.g. \cite[Theorem 1.5]{SV}), denoted by $(\epsilon_0,\ldots,\epsilon_n)$, which  is   called the order-sequence of $\xx$ with respect to $\D$. This sequence can also be defined as the minimal sequence, with respect to the lexicographic order, for which
$$
\det\left(D_t^{(\epsilon_i)}f_j\right)_{0 \leq i,j \leq n} \neq 0,
$$
where $t \in \fqc(\xx)$ is a separating variable. Moreover, for each $P \in \xx$,
\begin{equation}\label{ej}
\epsilon_i \leq j_i(P) \text{  for all  } i \in \{0,\ldots,n\}.
\end{equation}
The curve $\xx$ is called classical with respect to $\phi$ (or $\D$) if $(\epsilon_0,\ldots,\epsilon_n)=(0,\ldots,n)$ and  is called nonclassical otherwise.

Now assume that $\phi$ is defined over $\fq$. The sequence of non-negative integers $(\nu_0,\ldots,\nu_{n-1})$, chosen minimally in the lexicographic order, such that
\begin{equation}\label{det fr}
\left|
  \begin{array}{ccc}
  f_0^q & \cdots & f_n^q \\
  D_t^{(\nu_0)}f_0 & \cdots & D_t^{(\nu_0)}f_n \\
   \vdots & \cdots & \vdots \\
  D_t^{(\nu_{n-1})}f_0 & \cdots & D_t^{(\nu_{n-1})}f_n
  \end{array}
  \right| \neq 0,
  \end{equation}
where $t\in \fq(\xx)$ is a separating variable, is called the $\fq$-Frobenius order-sequence of $\xx$ with respect to $\phi$. From \cite[Proposition 2.1]{SV}, we have that $\{\nu_0,\ldots,\nu_{n-1}\}=\{\epsilon_0,\ldots,\epsilon_n\}\backslash\{\epsilon_I\}$ for some $I \in \{1,\ldots,n\}$. If $(\nu_0,\ldots,\nu_{n-1})=(0,\ldots,n-1)$, then  $\xx$ is called $\fq$-Frobenius classical with respect to $\phi$. Otherwise, $\xx$ is called $\fq$-Frobenius nonclassical. The following result follows from  \cite[Theorem 1.1]{Bo2}.

\begin{thm}\label{multi-frob}

Let $m$ and $u$  be coprime  integers, with $m>u\geq 1$, and consider the plane curve  \\
$\mathcal{F}_{m,u}: f(x,y)=0,$ where $f(x,y) \in \F_q[x,y]$ is the  polynomial  $$f(x,y)=\frac{(x^{q^u}-x)(y^{q^m}-y)-(x^{q^m}-x)(y^{q^u}-y)}{(x^{q^2}-x)(y^{q}-y)-(x^{q}-x)(y^{q^2}-y)}.$$
Then  $\mathcal{F}_{m,u}$   is the only  simultaneously $\F_{q^m}$- and  $\F_{q^u}$-Frobenius nonclassical curve for the morphism of lines. It has $\mathcal{F}_{m,u}(\F_q)=\emptyset$ and order-sequence $(0,1,q^u)$.  In particular,  $x \in \F_q(\mathcal{F}_{m,u})$ is a separating variable  and
\begin{equation}\label{multi-frobeq1}
  \left|
  \begin{array}{ccc}
  1 & x^{q^u} & y^{q^u} \\
1 & x & y \\  
D_x^{(q^u)}1 & D_x^{(q^u)}x  & D_x^{(q^u)}y
  \end{array}
  \right|\cdot
  \left|
  \begin{array}{ccc}
  1 & x^{q^m} & y^{q^m} \\
1 & x & y \\  
D_x^{(q^u)}1 & D_x^{(q^u)}x  & D_x^{(q^u)}y
  \end{array}
  \right| \neq 0
  \end{equation}
  
 and   
  
\begin{equation}\label{multi-frobeq2}
\left| 
  \begin{array}{ccc}
  1 & x^{q^u} & y^{q^u} \\
1 & x & y \\  
D_x^{(i)}1 & D_x^{(i)}x & D_x^{(i)}y
  \end{array}
  \right|= 
  \left|
  \begin{array}{ccc}
  1 & x^{q^m} & y^{q^m} \\
1 & x & y \\  
D_x^{(i)}1 & D_x^{(i)}x & D_x^{(i)}y
  \end{array}
  \right|=0
  \text{ for }   i \in \{0,1,\ldots, q^u-1\}.
 \end{equation}
\end{thm}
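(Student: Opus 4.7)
The plan is to unpack the simultaneous $\F_{q^u}$- and $\F_{q^m}$-Frobenius nonclassicality for the morphism of lines $(1:x:y)$ of a plane curve $\mathcal{F}:g(x,y)=0$ into explicit polynomial identities, and then recognize $\mathcal{F}_{m,u}$ as the unique solution. For $n=2$, the classical Frobenius order sequence is $(0,1)$, so $\F_{q^r}$-Frobenius nonclassicality amounts to the vanishing on $\mathcal{F}$ of
\[
\det\begin{pmatrix} 1 & x^{q^r} & y^{q^r} \\ 1 & x & y \\ 0 & 1 & D_x^{(1)}y \end{pmatrix} = (x^{q^r}-x)\,D_x^{(1)}y - (y^{q^r}-y).
\]
Imposing both $r=u$ and $r=m$ simultaneously and eliminating $D_x^{(1)}y$ gives
\[
(x^{q^u}-x)(y^{q^m}-y) - (x^{q^m}-x)(y^{q^u}-y) \equiv 0 \pmod{g},
\]
so $g$ must divide the numerator $N(x,y)$ of the rational function $f$.

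Next, I would verify that $\mathcal{F}_{m,u}$ is the unique irreducible curve consistent with the Frobenius data, cut out precisely by $f=N/D$. A key preliminary step is showing that the denominator $D(x,y) = (x^{q^2}-x)(y^q-y)-(x^q-x)(y^{q^2}-y)$ divides $N(x,y)$ in $\fq[x,y]$: this follows by using the decomposition $x^{q^r}-x=\sum_{i=0}^{r-1}(x^q-x)^{q^i}$ (and the analogous one in $y$) and collecting terms, exploiting $\gcd(u,m)=1$ to prevent cancellations from killing the quotient. Irreducibility of $f$ can then be argued by examining it as a polynomial in $y$ over $\fq(x)$ and ruling out proper factors via the coprimality of $u$ and $m$. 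The absence of $\fq$-rational points follows because any $(x_0,y_0)\in \fq^2$ makes $x_0^{q^r}-x_0=y_0^{q^r}-y_0=0$ for every $r\geq 1$, so both $N$ and $D$ vanish at $(x_0,y_0)$; a local expansion around such a point shows that $N$ vanishes strictly more, forcing $f(x_0,y_0)\neq 0$. Uniqueness then reduces to checking that no factor of $D$ satisfies the $(u,m)$-nonclassicality condition in a nontrivial way, an easy case analysis.

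Finally, by Theorem \ref{SV1.1} applied at a generic $P\in\mathcal{F}_{m,u}$, one has $j_0=0$ and $j_1=1$, while $j_2$ is the least $j$ with $D_x^{(j)}y(P)\neq 0$. The simultaneous Frobenius nonclassicality forces both $(x^{q^u},y^{q^u})$ and $(x^{q^m},y^{q^m})$ to lie in the span of $(1,x,y)$ and $(0,1,D_x^{(1)}y)$; plugging into the determinantal test of Theorem \ref{SV1.1} produces the vanishing (\ref{multi-frobeq2}) for all $i<q^u$, while a direct computation shows $D_x^{(q^u)}y$ is generically nonzero, giving (\ref{multi-frobeq1}) and $j_2=q^u$. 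The hardest part is the divisibility $D\mid N$ together with the irreducibility of the quotient; both rely delicately on $\gcd(u,m)=1$ and on the additive structure of the $\fq$-linearized polynomials $x^{q^r}-x$, and this combinatorial-arithmetic analysis is the main obstacle of the proof.
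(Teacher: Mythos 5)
The paper offers no proof of Theorem \ref{multi-frob} at all --- it is imported wholesale from \cite[Theorem~1.1]{Bo2} --- so there is no internal argument to measure yours against; I am judging the sketch on its own. Your opening reduction is correct and is the natural one: for the morphism of lines the $\F_{q^r}$-Frobenius nonclassicality conditions read $(x^{q^r}-x)\,dy/dx=y^{q^r}-y$ on the curve for $r=u,m$, and eliminating $dy/dx$ shows that any simultaneously nonclassical curve divides the numerator $N=(x^{q^u}-x)(y^{q^m}-y)-(x^{q^m}-x)(y^{q^u}-y)$. The divisibility $D\mid N$ also goes through roughly as you indicate (write $N$ and $D$ in terms of $a=x^q-x$, $b=y^q-y$ and check that $ab$ and each $a-cb$, $c\in\F_q^{*}$, divide $N$); note in passing that this step does not actually use $\gcd(u,m)=1$.

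There are, however, two genuine gaps. First, your argument for $\mathcal{F}_{m,u}(\F_q)=\emptyset$ has the logic inverted and, once corrected, gives the wrong answer: since $N=fD$, the multiplicity of $f$ at $(x_0,y_0)\in\F_q^2$ is $\operatorname{mult}(N)-\operatorname{mult}(D)$, so if $N$ vanishes to \emph{strictly higher} order than $D$ there, then $f(x_0,y_0)=0$ and the point \emph{does} lie on the curve. Writing $s=x-x_0$, $t=y-y_0$, the lowest-order terms are $st^{q^u}-s^{q^u}t$ for $N$ and $s^{q}t-st^{q}$ for $D$, so $\operatorname{mult}(N)=q^u+1$ and $\operatorname{mult}(D)=q+1$; hence for $u\ge 2$ every point of $\mathbb{A}^2(\F_q)$ lies on $f=0$. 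The assertion $\mathcal{F}_{m,u}(\F_q)=\emptyset$ concerns branches (see the paper's conventions), and what must actually be shown is that no branch centered at such a (singular, when $u\ge2$) point is $\F_q$-rational --- a different and more delicate argument. Second, your uniqueness argument establishes only the necessary direction: knowing that every simultaneously nonclassical curve divides $N$ and that the factors of $D$ do not qualify does not show that $f=0$ itself \emph{is} simultaneously $\F_{q^u}$- and $\F_{q^m}$-Frobenius nonclassical. The converse implication (recovering the two individual conditions from $N\equiv 0$ on the curve, e.g.\ by differentiating $(x^{q^u}-x)(y^{q^m}-y)\equiv(x^{q^m}-x)(y^{q^u}-y)$ along the curve) is missing, as is any actual proof of the irreducibility of $f$, without which the phrases ``the only \dots curve'' and ``order-sequence $(0,1,q^u)$'' do not even parse; both points are among the substantive content of \cite{Bo2}.
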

Based on the proof of \cite[Theorem 1.5]{SV}, one can conclude the following:
\begin{prop}
 Consider $P \in \xx$ and let $t$ be a local parameter at $P$. For each $i \in \{0,\ldots,e\}$, let $z_i \in \fqc(\xx)$ be such that $z_i=t^{j_i}+\sum\limits_{k=j_i+1}^{\infty}a_k t^k$ with $j_0<j_1< \cdots <j_e$, where $e>0$ is an integer. Then, for $0 \leq m_0< \cdots <m_e$, 
\begin{equation}\label{derdet}
v_P\left(\det\left(D_t^{(m_s)}z_r\right)_{ 0 \leq r,s \leq e} \right) \geq \sum\limits_{i=0}^{e}(j_i-m_i),
\end{equation}
and the equality holds if and only if $p \nmid \det\left({j_r \choose  m_s}\right)_{ 0 \leq r,s \leq e}$.
\end{prop}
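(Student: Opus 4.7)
The plan is to mimic the local computation in the proof of \cite[Theorem 1.5]{SV}: expand each entry of the matrix as a power series in $t$, then use the Leibniz formula to read off the leading behaviour of the determinant.

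First I would compute the individual entries. Using $D_t^{(m)}t^k = \binom{k}{m}t^{k-m}$ (with $\binom{k}{m}=0$ for $k<m$), the hypothesis $z_r = t^{j_r}+\sum_{k>j_r} a_k t^k$ gives
$$
D_t^{(m_s)}z_r \;=\; \binom{j_r}{m_s}\,t^{j_r-m_s} \;+\; \sum_{k>j_r} a_k\binom{k}{m_s}\,t^{k-m_s}.
$$
Whether or not $\binom{j_r}{m_s}$ vanishes in $\fqc$, every nonzero monomial on the right has exponent $\ge j_r-m_s$, so $v_P(D_t^{(m_s)}z_r)\ge j_r-m_s$ in all cases.

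Next I would apply the Leibniz expansion of the determinant. Since the $m_s$ are distinct, any permutation $\sigma$ of $\{0,\dots,e\}$ contributes a product whose valuation is bounded below by
$$
\sum_{r=0}^{e}\bigl(j_r - m_{\sigma(r)}\bigr) \;=\; \sum_{i=0}^{e} j_i \;-\; \sum_{i=0}^{e} m_i \;=\; \sum_{i=0}^{e}(j_i-m_i),
$$
which proves \eqref{derdet}.

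To settle the equality criterion, I would isolate the coefficient of $t^{\sum(j_i-m_i)}$ in the determinant. Any factor that draws from its ``tail'' (a monomial $a_k\binom{k}{m_{\sigma(r)}}t^{k-m_{\sigma(r)}}$ with $k>j_r$) replaces $t^{j_r-m_{\sigma(r)}}$ by a strictly higher power while the remaining factors contribute at least their minimal exponents, so its product has exponent strictly greater than $\sum(j_i-m_i)$. Thus only the leading contributions of each factor survive at the minimal exponent, and the coefficient in question equals
$$
\sum_{\sigma}\sign(\sigma)\prod_{r=0}^{e}\binom{j_r}{m_{\sigma(r)}} \;=\; \det\!\left(\binom{j_r}{m_s}\right)_{0\le r,s\le e}
$$
as an element of $\fqc$. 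Hence equality in \eqref{derdet} is equivalent to this coefficient being nonzero in characteristic $p$, i.e.\ to $p\nmid \det(\binom{j_r}{m_s})$.

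The main delicate point is the bookkeeping in this last step: one must be confident that no combination of tail monomials from different factors conspires to produce the minimal exponent. This is handled by the strict-inequality observation above, which is the only argument that uses the assumption $m_0<\cdots<m_e$ (ensuring that the Leibniz expansion is free of repeated columns) in an essential way.
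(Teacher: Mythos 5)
Your proof is correct and is essentially the same local computation that the paper delegates to the proof of \cite[Theorem 1.5]{SV}: expand each Hasse derivative term by term, bound each Leibniz summand by $\sum_r(j_r-m_{\sigma(r)})=\sum_i(j_i-m_i)$, and observe that the coefficient of $t^{\sum(j_i-m_i)}$ is exactly $\det\bigl(\binom{j_r}{m_s}\bigr)$ because any tail monomial strictly raises the exponent. The only quibble is your closing remark: the hypothesis $m_0<\cdots<m_e$ is not actually needed for the bookkeeping (with a repeated $m_s$ both determinants vanish and the statement holds vacuously); it merely makes the statement nontrivial.
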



\section{The $(q^u,q^m)$-Frobenius divisor}\label{The $(q^u,q^m)$-Frobenius divisor}

Hereafter,  $\xx$ will  denote a projective  geometrically irreducible  nonsingular  algebraic curve of genus $g$ defined over $\fq$.
In addition, unless otherwise stated,  the morphisms 
\begin{equation}\label{morfismo}
\phi=(f_0:\ldots:f_n):\xx \lra \p^n(\fqc)
\end{equation}
will always  be  nondegenerated,   defined over $\fq$, and $n\geq 2$.  Let $m$ and $u$ be coprime  integers, with $m>u\geq 1$, and consider  the Frobenius maps  $\Phi_{q^i}: \xx \longrightarrow \xx$, $i\in \{u,m\}$. We want to estimate the number of points $P \in \xx$ for which  a line  through $\Phi_{q^u}(P)$ and $\Phi_{q^m}(P)$ intersects the osculating space  $L_{n-2}(P)$. This and  Theorem \ref{SV1.1} lead us to  study  the functions 
\begin{equation}\label{funçao n-2}
\A_t^{\rho_0,\ldots,\rho_{n-2}}(f_0,\ldots,f_n):=\left|
\begin{array}{cccc}
f_0^{q^m}& f_1^{q^m}&\cdots&f_n^{q^m} \\
f_0^{q^u}& f_1^{q^u}&\cdots&f_n^{q^u}\\
D_t^{(\rho_0)}f_0&D_t^{(\rho_0)}f_1&\cdots&D_t^{(\rho_0)}f_n\\
\vdots & \vdots & \cdots& \vdots \\
D_t^{(\rho_{n-2})}f_0&D_t^{(\rho_{n-2})}f_1&\cdots&D_t^{(\rho_{n-2})}f_n
\end{array}
\right|,
\end{equation}
where $t \in \fq(\xx)$ is a separating variable, and   $\rho_0,...,\rho_{n-2}$ are non-negative integers.

In what follows,  $(\nu_0,\ldots,\nu_{n-1})$ and $(\mu_0,\ldots,\mu_{n-1})$  will be  the $\fqu$- and the $\fqm$-Frobenius order-sequences,  respectively, on $\xx$ associated to   $\D$. 

\begin{prop}\label{seqdf} There exist integers $\kappa_0,\ldots,\kappa_{n-2}$, with   $0\leq \kappa_0<\cdots<\kappa_{n-2}$,
such that\\
 $\A_t^{\kappa_0,\ldots,\kappa_{n-2}}(f_0,\ldots,f_n) \neq 0$. Choose them minimally in the lexicographic order. Then there exist $I, J \in \{0,\ldots, n-1\}$ such that
\begin{equation}\label{kap}
\{\nu_0,\ldots,\nu_{n-1}\}\backslash\{\nu_I\}= \{\kappa_0,\ldots,\kappa_{n-2}\}=\{\mu_0,\ldots,\mu_{n-1}\}\backslash\{\mu_J\}.
\end{equation}
\end{prop}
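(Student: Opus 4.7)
The plan is to split the argument into three parts: existence of the sequence $(\kappa_i)$, an explicit description of the lex-minimal choice in terms of the generic order sequence, and matching with the $\nu$- and $\mu$-sequences.

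\textbf{Existence.} Observe that $\A_t^{\rho_0,\ldots,\rho_{n-2}}(f_0,\ldots,f_n) \neq 0$ is equivalent to the $n+1$ row vectors $f^{q^m}$, $f^{q^u}$, $D_t^{(\rho_0)}f$, $\ldots$, $D_t^{(\rho_{n-2})}f$ being linearly independent over $\fqc(\xx)$ in $V := \fqc(\xx)^{n+1}$, with the shorthand $f := (f_0,\ldots,f_n)$. The pair $\{f^{q^m}, f^{q^u}\}$ itself is independent: were $f^{q^m} = \lambda f^{q^u}$ for some $\lambda \in \fqc(\xx)$, then evaluating at a general point $P \in \xx$ would give $\phi(P)^{q^m} = \phi(P)^{q^u}$ in $\p^n(\fqc)$, forcing $\phi(P) \in \p^n(\F_{q^{m-u}})$; but $\p^n(\F_{q^{m-u}})$ is finite while $\phi(\xx)$ is infinite, contradicting nondegeneracy. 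Since $\{D_t^{(\epsilon_i)}f\}_{i=0}^n$ already forms a basis of $V$, one can complete $\{f^{q^m}, f^{q^u}\}$ to a basis by adjoining $n-1$ Hasse derivative rows, and the greedy lex-minimal such completion defines $(\kappa_0,\ldots,\kappa_{n-2})$.

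\textbf{Characterization in terms of $\epsilon$.} Introduce the filtration $U_j := \langle D_t^{(k)}f : 0 \leq k \leq j\rangle \subseteq V$, whose dimension strictly increases precisely at $j \in \{\epsilon_0,\ldots,\epsilon_n\}$, and let $W := \langle f^{q^u}, f^{q^m}\rangle$, a two-dimensional subspace of $V$. The step function $j \mapsto \dim(W \cap U_j)$ takes values in $\{0,1,2\}$ and can only grow when $U_j$ itself grows; let $\epsilon_{I_1} < \epsilon_{I_2}$ denote its two jump indices. An exchange-lemma argument in the spirit of \cite[Prop.~2.1]{SV}, but with the two-dimensional seed $W$ in place of a single vector, shows that the lex-minimal completion is exactly
\begin{equation*}
\{\kappa_0,\ldots,\kappa_{n-2}\} \;=\; \{\epsilon_0,\ldots,\epsilon_n\}\setminus\{\epsilon_{I_1}, \epsilon_{I_2}\}.
\end{equation*}

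\textbf{Matching with $\nu$ and $\mu$.} Let $I_u$ be the unique index with $f^{q^u} \in U_{\epsilon_{I_u}} \setminus U_{\epsilon_{I_u}-1}$; by \cite[Prop.~2.1]{SV}, $\{\nu_0,\ldots,\nu_{n-1}\} = \{\epsilon_0,\ldots,\epsilon_n\}\setminus\{\epsilon_{I_u}\}$. Since $f^{q^u}\in W \cap U_{\epsilon_{I_u}}$ while $f^{q^u}\notin U_{\epsilon_{I_u}-1}$, the step function $\dim(W\cap U_j)$ necessarily jumps at $j = \epsilon_{I_u}$, hence $I_u \in \{I_1,I_2\}$. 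Writing $I'$ for the remaining element of $\{I_1,I_2\}$, we have $\epsilon_{I'} \in \{\nu_0,\ldots,\nu_{n-1}\}$, so $\epsilon_{I'} = \nu_I$ for a unique $I \in \{0,\ldots,n-1\}$, and the characterization rewrites as $\{\kappa_0,\ldots,\kappa_{n-2}\} = \{\nu_0,\ldots,\nu_{n-1}\}\setminus\{\nu_I\}$; the symmetric argument with $f^{q^m}$ in place of $f^{q^u}$ yields the corresponding $J$. The main obstacle is the exchange/greedy step in the characterization: one must verify that lex-minimality with two simultaneous seed vectors genuinely skips exactly the two $\epsilon$-indices at which $W$ meets the filtration, but the required bookkeeping is a direct two-seed analogue of the argument in \cite[Prop.~2.1]{SV}.
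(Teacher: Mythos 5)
Your proposal is correct, but it takes a genuinely different route from the paper's. The paper never introduces the filtration $U_j$ or the generic order sequence $(\epsilon_0,\ldots,\epsilon_n)$: it works directly inside the basis $\Phi_{q^u}(\phi),D_t^{(\nu_0)}(\phi),\ldots,D_t^{(\nu_{n-1})}(\phi)$ furnished by \eqref{det fr}, takes $I$ to be the smallest index for which $\Phi_{q^m}(\phi)$ is a linear combination of $\Phi_{q^u}(\phi),D_t^{(\nu_0)}(\phi),\ldots,D_t^{(\nu_I)}(\phi)$ (with $I=n-1$ when no such dependence occurs), performs a single exchange to conclude that $\{\nu_0,\ldots,\nu_{n-1}\}\setminus\{\nu_I\}$ gives a nonvanishing determinant, and then invokes the same argument verbatim for the $\mu$'s. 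You instead characterize the lex-minimal completion intrinsically, $\{\kappa_0,\ldots,\kappa_{n-2}\}=\{\epsilon_0,\ldots,\epsilon_n\}\setminus\{\epsilon_{I_1},\epsilon_{I_2}\}$, and derive both halves of \eqref{kap} symmetrically from that single statement; as a byproduct you obtain the $\epsilon$-description of the $\kappa$'s, which the paper only exploits implicitly later (compare the first line of the proof of Proposition \ref{wfnc}). The cost is the two-seed greedy step you flag as the main obstacle, but it is genuinely routine and you could close it in three lines: jumps of $j\mapsto\dim(W\cap U_j)$ have size at most one because $\dim U_j-\dim U_{j-1}\le 1$ (so there really are two distinct jump indices $I_1<I_2$ --- a point you should state explicitly); the greedy completion picks exactly the jump set of $\dim(W+U_j)=\dim U_j+2-\dim(W\cap U_j)$, which is $\{\epsilon_i : i\notin\{I_1,I_2\}\}$; and lex-minimality coincides with greediness because linear independence modulo $W$ defines a matroid on the derivative indices. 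Note that the paper leaves an exactly analogous identification implicit (``Therefore, we clearly have...''), namely that the lex-minimal $\kappa$'s agree with the set produced by its exchange, so the two proofs sit at a comparable level of rigor; what yours buys is the sharper $\epsilon$-characterization and a symmetric treatment of $\nu$ and $\mu$, while the paper's is shorter and self-contained given \eqref{det fr}. One citation caveat: \cite[Proposition 2.1]{SV} asserts only that $\{\nu_i\}=\{\epsilon_i\}\setminus\{\epsilon_I\}$ for \emph{some} $I$; the identification of the omitted order as the first $\epsilon$ with $f^{q^u}\in U_{\epsilon_I}$ is contained in its proof (it is the one-seed case of your greedy step) rather than in its statement, so you should either cite the proof or note that it follows from your own argument with $W$ replaced by $\langle f^{q^u}\rangle$.
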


\begin{proof}
For $i \in \{m,n\}$ and $j\in \{\nu_0,\ldots,\nu_{n-1}\}$,  set $\Phi_{q^i} (\phi):=(f_0^{q^i},\ldots,f_n^{q^i})$ and $D_t^{(j)}(\phi):=(D_t^{(j)}f_0,\ldots,D_t^{(j)}f_n)$ to represent the vectors.  It is clear that $\Phi_{q^u} (\phi)$ and $\Phi_{q^m} (\phi)$ are linearly independent, and  \eqref{det fr} gives the linear independence of  vectors $\Phi_{q^u} (\phi),D_t^{(\nu_{0})}(\phi),\ldots,D_t^{(\nu_{n-1})}(\phi)$. Now if the $n+1$ vectors $$\Phi_{q^m} ( \phi), \Phi_{q^u} (\phi), D_t^{(\nu_{0})}(\phi), \ldots, D_t^{(\nu_{n-2})}(\phi)$$ are linearly independent over $\fq(\xx)$, then the result follows. Otherwise, let $I \in \{0,\ldots,n-2\}$ be the smallest integer for which $\Phi_{q^m} ( \phi)$ is a linear combination of $\Phi_{q^u} (\phi), D_t^{(\nu_{0})}(\phi), \ldots,D_t^{(\nu_{I})}(\phi)$, and  note that the linear independence of  $\Phi_{q^u} (\phi),D_t^{(\nu_{0})}(\phi),\ldots,D_t^{(\nu_{n-1})}(\phi)$ implies that of  
$$\{\Phi_{q^m} (\phi),\Phi_{q^u} (\phi),D_t^{(\nu_{0})}(\phi),\ldots,D_t^{(\nu_{n-1})}(\phi) \}\backslash \{D_t^{(\nu_{I})}(\phi)\}.$$
Therefore,  we clearly have $\{\kappa_0,\ldots,\kappa_{n-2}\}=\{\nu_0,\ldots,\nu_{n-1}\}\backslash\{\nu_I\}$,  and the same argument
implies $\{\kappa_0,\ldots,\kappa_{n-2}\}=\{\mu_0,\ldots,\mu_{n-1}\}\backslash\{\mu_J\}$. 
\end{proof}

Hereafter, the sequence $(\kappa_0,\ldots,\kappa_{n-2})$, given in Proposition \ref{seqdf}, will be called the $(q^u,q^m)$-Frobenius order-sequence of $\xx$ with respect to $\D$ (or  $\phi$). The curve $\xx$ will be called $(q^u,q^m)$-Frobenius classical with respect to $\D$ when $\kappa_i=i$ for all $i \in\{0,1,\ldots,n-2\}$. Otherwise,  $\xx$ will be called $(q^u,q^m)$-Frobenius nonclassical. Note that $\kappa_{n-2} \leq \epsilon_n \leq d=\deg(\D)$.

We shall omit the proof of the following result, as it is analogous to  \cite[Proposition 2.2]{SV}.
\begin{prop}\label{mcp}
\begin{enumerate}[\rm(a)]
\item If $g_i=\sum a_{ij}f_j$ with $(a_{ij}) \in GL_{n+1}(\fq)$, then
\begin{displaymath}
\A_{t}^{\kappa_0,\ldots,\kappa_{n-2}}(g_0,\ldots,g_n)=\det(a_{ij})\cdot \A_{t}^{\kappa_0,\ldots,\kappa_{n-2}}(f_0,\ldots,f_n).
\end{displaymath}
\item If $h \in \fq(\xx)^{*}$, then
\begin{displaymath}
\A_{t}^{\kappa_0,\ldots,\kappa_{n-2}}(hf_0,\ldots,hf_n)=h^{q^m+q^u+n-1}\cdot \A_{t}^{\kappa_0,\ldots,\kappa_{n-2}}(f_0,\ldots,f_n).
\end{displaymath}
\item If $x \in \fq(\xx)$ is another separating variable, then
\begin{displaymath}
\A_{x}^{\kappa_0,\ldots,\kappa_{n-2}}(f_0,\ldots,f_n)=\left(\displaystyle\frac{dt}{dx}\right)^{\kappa_0+\kappa_1+\cdots+\kappa_{n-2}}\cdot \A_{t}^{\kappa_0,\ldots,\kappa_{n-2}}(f_0,\ldots,f_n).
\end{displaymath}
\end{enumerate}
\end{prop}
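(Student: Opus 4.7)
The plan is to verify each of the three transformation properties by standard multilinear algebra on determinants, closely mirroring \cite[Proposition 2.2]{SV}. Throughout, the key structural facts I rely on are: (i) Hasse derivatives and the Frobenius $f\mapsto f^{q^i}$ are additive, (ii) Frobenius fixes $\fq$ pointwise, and (iii) the sequence $(\kappa_0,\ldots,\kappa_{n-2})$ is \emph{minimal} in the lexicographic order among sequences of non-negative integers making $\A_t^{\cdots}(f_0,\ldots,f_n)\ne 0$.

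For part (a), the identities $g_i^{q^m}=\sum_j a_{ij}f_j^{q^m}$ and $g_i^{q^u}=\sum_j a_{ij}f_j^{q^u}$ hold because $a_{ij}\in\fq$ is fixed by Frobenius, and $D_t^{(\rho)}g_i=\sum_j a_{ij}D_t^{(\rho)}f_j$ by $\fq$-linearity of the Hasse derivative. Hence the $g$-matrix is obtained from the $f$-matrix by right-multiplying by $(a_{ij})^{T}$, and the claim follows from $\det(a_{ij})^T=\det(a_{ij})$.

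For part (b), I would apply the Leibniz rule for Hasse derivatives,
\[
D_t^{(\rho)}(hf_j)=\sum_{a+b=\rho}D_t^{(a)}(h)\,D_t^{(b)}(f_j),
\]
together with the scalar factorizations $(hf_j)^{q^m}=h^{q^m}f_j^{q^m}$ and $(hf_j)^{q^u}=h^{q^u}f_j^{q^u}$. Pulling $h^{q^m}$ out of row~$1$ and $h^{q^u}$ out of row~$2$, expand the determinant multilinearly in the $n-1$ derivative rows, producing a sum indexed by tuples $(b_0,\ldots,b_{n-2})$ with $b_i\le \kappa_i$, whose summand is $\prod_i D_t^{(\kappa_i-b_i)}(h)$ times a determinant with derivative rows $D_t^{(b_0)},\ldots,D_t^{(b_{n-2})}$. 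The main term $(b_i)=(\kappa_i)$ contributes $h^{n-1}\cdot\A_t^{\kappa_0,\ldots,\kappa_{n-2}}(f_0,\ldots,f_n)$, yielding the total factor $h^{q^m+q^u+n-1}$. Every other term vanishes: either two $b_i$'s coincide (repeated rows), or after sorting the $b_i$'s one obtains a strictly smaller sequence in lexicographic order, and by minimality of $(\kappa_0,\ldots,\kappa_{n-2})$ the associated determinant is zero.

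For part (c), the chain rule for Hasse derivatives expresses
\[
D_x^{(\kappa_i)}f_j=\left(\frac{dt}{dx}\right)^{\kappa_i}D_t^{(\kappa_i)}f_j+\sum_{b<\kappa_i}c_{i,b}\,D_t^{(b)}f_j,
\]
while the two Frobenius rows depend only on $f_j^{q^m}$ and $f_j^{q^u}$ and are therefore unaffected by the change of separating variable. Expanding multilinearly as in (b), the "diagonal" term yields $\prod_i(dt/dx)^{\kappa_i}=(dt/dx)^{\kappa_0+\cdots+\kappa_{n-2}}$ times $\A_t^{\kappa_0,\ldots,\kappa_{n-2}}(f_0,\ldots,f_n)$, and all off-diagonal contributions vanish by the same repeated-row/minimality argument used in (b). The main obstacle in writing out the full proof is the bookkeeping in (b) and (c) showing that any nontrivial replacement of $\kappa_i$ by a smaller index either produces a repeated row or a strictly lexicographically smaller sequence that is forbidden by the minimality of $(\kappa_0,\ldots,\kappa_{n-2})$; this is the step that genuinely uses Proposition~\ref{seqdf} rather than just formal manipulations.
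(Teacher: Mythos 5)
Your proof is correct and follows the same route the paper intends: the paper omits the proof precisely because it is "analogous to [SV, Proposition 2.2]," and your argument (linearity of Frobenius and Hasse derivatives for (a), Leibniz/chain-rule expansion plus the repeated-row or lexicographic-minimality vanishing for (b) and (c)) is exactly that standard argument, with the only genuinely new ingredient being that the two Frobenius rows factor out $h^{q^m}$ and $h^{q^u}$ in (b) and are untouched by the change of variable in (c), which you handle correctly.
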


\begin{defi}\label{T-divisor}
The $(q^u,q^m)$-Frobenius divisor of $\D$ is defined by
\begin{equation}\nonumber
T_{u,m}=\divi (\A_{t}^{\kappa_0,\ldots,\kappa_{n-2}}(f_0,\ldots,f_n))+(\kappa_0+\kappa_1+\cdots+\kappa_{n-2})\divi(dt)+(q^m+q^u+n-1)E,
\end{equation}
where $t \in \fq(\xx)$  is a separating variable and $E=\sum\limits_{P \in \xx} e_P P$  for  $e_P=-min\{v_P(f_0),\ldots,v_P(f_n)\}$.
\end{defi}

Note that given $u$ and $m$,  Proposition \ref{mcp} implies that  the sequence $(\kappa_0,\ldots,\kappa_{n-2})$, as well  as the divisor $T_{u,m}$, depend only on the linear series $\D$. In addition,

\begin{equation}\label{grau t}
\deg(T_{u,m})=(\kappa_0+\kappa_1+\cdots+\kappa_{n-2})(2g-2)+(q^m+q^u+n-1)d,
\end{equation}
where $d:=\deg(\D)=\deg(E)$.

Based on Propostion \ref{mcp} and  Definition \ref{T-divisor}, for any given $P \in \xx$, we can  assume $e_P=0$ before computing $v_P(T_{u,m})$. This will be done systematically,   unless otherwise noted.

\begin{prop}\label{kappa0}
Let $\phi=(f_0:\cdots:f_n):\xx \lra \p^n(\fqc)$ be a morphism defined over $\fq$ and let $(\kappa_0,\ldots,\kappa_{n-2})$ be its $(q^u,q^m)$-Frobenius order-sequence. If $\kappa_0>0$, then $m>n\geq 2$ and the following hold.

\begin{enumerate}[\rm(i)]
\item  The field $\fq(\phi(\xx))$ is $\F_q$-isomorphic to an intermediate field of  $N/\fq(\mathcal{F}_{u,m})$, where  $\mathcal{F}_{u,m}$ is  the curve in Theorem \ref{multi-frob}  and  $N$ is the Galois closure of  $\fq(\ff_{u,m})/\fq(x)$ in a fixed algebraic closure of $\fq(x)$.

\item There exist only  a finite number of curves $\xx$ (up to $\F_q$-isomorphism) that admit a projective model for which $\kappa_0>0$.
\item If $n=2$, then $\phi(\xx)$ is isomorphic to $\ff_{u,m}$.
\item $\kappa_0=q^u$.
\item $\xx(\F_q)=\emptyset$.

\end{enumerate}
\end{prop}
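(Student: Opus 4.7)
The strategy is to extract one structural identity from the hypothesis $\kappa_0>0$ and read off all five conclusions from it. Since $\epsilon_0=0$ is never removed when passing to the Frobenius order sequences \cite[Prop.~2.1]{SV}, one has $\nu_0=\mu_0=0$; combined with \eqref{kap} and $\kappa_0>0$ this forces $\nu_I=\mu_J=0$, hence $I=J=0$ and $\kappa_0=\nu_1=\mu_1$. Re-reading the proof of Proposition \ref{seqdf}, the case $I=0$ means that $\Phi_{q^m}(\phi)$ belongs to the $\fq(\xx)$-span of $\Phi_{q^u}(\phi)$ and $\phi$, so there exist $\alpha,\beta\in\fq(\xx)$ with
\[
f_i^{q^m}=\alpha\,f_i^{q^u}+\beta\,f_i\qquad(i=0,\ldots,n).
\]
If $\beta=0$, then every ratio $w=f_i/f_j$ satisfies $w^{q^m-q^u}=1$, forcing $w$ to be constant and violating the linear independence of the $f_i$; hence $\beta\neq 0$.

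\textbf{From the identity to (i), (ii), (iii), (v).} The identity is equivalent to the vanishing of the $3\times 3$ determinant with rows $(f_i^{q^m},f_j^{q^m},f_k^{q^m})$, $(f_i^{q^u},f_j^{q^u},f_k^{q^u})$, $(f_i,f_j,f_k)$ for every triple of indices. Dehomogenizing, this is exactly the numerator of the defining polynomial of $\ff_{u,m}$ in the coordinates $(f_i/f_k,f_j/f_k)$ (Theorem \ref{multi-frob}); the extraneous denominator $D$ vanishes only on $\fq$-rational lines, and the triple projections are nondegenerate (any three of $f_0,\ldots,f_n$ are $\fqc$-linearly independent), so each projection $(f_i:f_j:f_k):\xx\to\mathbb{P}^2$ has image inside $\ff_{u,m}$. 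Setting $w_i=f_i/f_0$, each pair $(w_1,w_i)$ thus satisfies $\ff_{u,m}$'s equation, so every $w_i$ for $i\geq 2$ is a root of the minimal polynomial of $y$ over $\fq(w_1)$ in $\fq(\ff_{u,m})=\fq(w_1,y)$. The $w_i$ are therefore Galois conjugates lying in the Galois closure $N$, giving $\fq(\phi(\xx))=\fq(w_1,\ldots,w_n)\subset N$, which is (i); part (ii) is then immediate from the finiteness of subfields of $N$; for $n=2$, $\phi(\xx)$ and $\ff_{u,m}$ are irreducible plane curves with the former contained in the latter, forcing equality and giving (iii); for (v), if $P\in\xx(\fq)$ then any triple projection with $f_i(P)\neq 0$ sends $P$ to an $\fq$-point of $\ff_{u,m}$, contradicting $\ff_{u,m}(\fq)=\emptyset$.

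\textbf{Proof of (iv).} The Lucas-type identity $D_t^{(k)}(g^{q^u})=0$ for $0<k<q^u$ is the workhorse. Applying $D_t^{(r)}$ to the identity for $0<r<q^u$ and expanding with the Leibniz rule collapses to
\[
0=D_t^{(r)}(\alpha)\,f_i^{q^u}+\sum_{k=0}^{r}D_t^{(r-k)}(\beta)\,D_t^{(k)}(f_i).
\]
Since $\beta\neq 0$, induction on $r$ yields $D_t^{(r)}(\phi)\in V:=\mathrm{span}_{\fq(\xx)}\{\Phi_{q^u}(\phi),\phi\}$ for every $0<r<q^u$. Because $\Phi_{q^m}(\phi)$ also lies in $V$, any determinant $\A_t^{k_0,\ldots,k_{n-2}}$ with $0<k_0<q^u$ has three linearly dependent rows and vanishes, so $\kappa_0\geq q^u$. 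For the reverse, applying $D_t^{(q^u)}$ produces the genuinely new term $\alpha(D_t^{(1)}f_i)^{q^u}$ that escapes $V$; one then completes to a full-rank determinant by adjoining $n-2$ classical derivatives $D_t^{(\epsilon_i)}(\phi)$ with $\epsilon_i\geq q^u$ (these exist since $\nu_1=q^u\leq\epsilon_2$ and the classical derivatives span $\fqc(\xx)^{n+1}$). This gives $\kappa_0\leq q^u$ and hence (iv).

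\textbf{The inequality $m>n$.} I expect this to be the hardest step, as it is arithmetic rather than linear-algebraic. The plan is to combine $\kappa_0=q^u$, the strict chain $q^u=\kappa_0<\kappa_1<\cdots<\kappa_{n-2}\leq\epsilon_n$, and the fact from \eqref{kap} that both the $\fqu$- and $\fqm$-Frobenius sequences of $\xx$ must contain $q^u$ and omit a common prescribed element, in order to derive arithmetic constraints on the $\mu_i$ and $\nu_i$ that rule out $m\leq n$. A careful case analysis in the spirit of \cite[Prop.~2.1]{SV}, applied to both Frobenius sequences simultaneously, is the main technical obstacle I anticipate.
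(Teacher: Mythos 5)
Your reduction of $\kappa_0>0$ to the rank-$2$ identity $f_i^{q^m}=\alpha f_i^{q^u}+\beta f_i$, and your handling of (i), (ii), (iii) and (v), match the paper's proof almost step for step (the paper likewise notes the $f_i$ are roots of $h(x,T)$ by nondegeneracy and reads off (i)--(iii) from Theorem \ref{multi-frob}, and (v) from $\ff_{u,m}(\fq)=\emptyset$); your induction showing $D_t^{(r)}\phi\in V$ for $0<r<q^u$, hence $\kappa_0\geq q^u$, is correct and is a nice self-contained substitute for the paper's appeal to \eqref{multi-frobeq2}. The first genuine gap is that you never prove $m>n$, and the route you announce (a case analysis on $(\nu_i)$ and $(\mu_i)$ in the spirit of \cite[Prop.~2.1]{SV}) is not the right idea: the inequality does not come from the Frobenius order-sequences but from a one-line root count that your setup already contains. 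Since the $a_i\in\fq$ commute with both Frobenius twists, the identity gives $g^{q^m}=\alpha g^{q^u}+\beta g$ for every $g=a_0+a_1x+a_2f_2+\cdots+a_nf_n$ with $a_i\in\fq$, and the normalization $f_0=1$ forces $\alpha+\beta=1$; a direct computation then shows each such $g$ is a root of $(x^{q^m}-x)(T^{q^u}-T)-(T^{q^m}-T)(x^{q^u}-x)\in\fq(x)[T]$. Nondegeneracy produces $q^{n+1}$ distinct roots of a polynomial of degree $q^m$ in $T$, so $q^{n+1}\leq q^m$, i.e.\ $m\geq n+1$. This is exactly the paper's argument, and it is the missing idea.

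The second gap is in (iv): the upper bound $\kappa_0\leq q^u$ rests on the bare assertion that $\alpha(D_t^{(1)}f_i)^{q^u}$ ``escapes $V$.'' Make it precise (take $t=x=f_1$, $f_0=1$, and note $\alpha\neq0$ by the same argument as your $\beta\neq0$): your Leibniz computation shows $D_x^{(q^u)}\phi\in V$ if and only if $\Phi_{q^u}(D_x^{(1)}\phi)\in V$, and since the $r=1$ step gives $D_x^{(1)}f_i=(f_i^{q^u}-f_i)/(x^{q^u}-x)$, this membership is equivalent to $(D_x^{(1)}f_i)^{q^u}=D_x^{(1)}f_i$, i.e.\ $D_x^{(1)}f_i\in\F_{q^u}$ for every $i$. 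Nothing in your proposal excludes this degenerate possibility; ruling it out is precisely the nontrivial nonvanishing content of \eqref{multi-frobeq1} in Theorem \ref{multi-frob} (equivalently, that $\ff_{u,m}$ has order-sequence $(0,1,q^u)$ and not something larger), which is what the paper invokes at this exact point, applied to the subfields $\fq(x,f_i)\cong\fq(\ff_{u,m})$. Your completion step, by contrast, is salvageable without the circular-looking appeal to $\nu_1=q^u$: every $D_t^{(j)}\phi$ with $j<q^u$ lies in $V$, and $V$ is contained in the span of the three rows $\Phi_{q^m}\phi,\Phi_{q^u}\phi,D_t^{(q^u)}\phi$ (use $\beta\neq0$ to recover $\phi$), so completing from the spanning family $\{D_t^{(j)}\phi\}_{j\geq 0}$ automatically uses orders exceeding $q^u$. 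With the root-counting proof of $m>n$ supplied and the escape claim backed by \eqref{multi-frobeq1}, your argument closes.
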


\begin{proof}
Assume $f_0=1$ and set  $x:=f_1$. In particular,  $\fq(\phi(\xx))=\fq(x,f_2,\ldots,f_n)$. Since $\kappa_0>0$, the matrix 
$$
\left(
\begin{array}{ccccc}
1&x^{q^m}&f_2^{q^m} & \cdots & f_n^{q^m} \\
1&x^{q^u}&f_2^{q^u} & \cdots & f_n^{q^u} \\
1&x&f_2 & \cdots & f_n
\end{array}
\right)
$$
has rank $2$, and then, $f_2,\ldots,f_n$ are roots of $(x^{q^m}-x)(T^{q^u}-T)-(T^{q^m}-T)(x^{q^u}-x) \in \F_q(x)[T]$. As  $\phi$ is nondegenerated,  the set $\{a_0+a_1 x+a_2f_2+\cdots +a_nf_n| a_i\in \F_q\}$ has  $q^{n+1}$ distinct roots of  the above polynomial,
which gives $m>n\geq 2$.  In addtion, since $f_2,\ldots,f_n$ are roots of the separable polynomial
$$h(x,T)=\frac{(x^{q^m}-x)(T^{q^u}-T)-(T^{q^m}-T)(x^{q^u}-x)}{(x^{q^2}-x)(T^{q}-T)-(T^{q^2}-T)(x^{q}-x)}\in \fq(x)[T],$$
assertions $(i)$, $(ii)$, and $(iii)$ immediately  follow from Theorem \ref{multi-frob}. To prove $(iv)$, note that $x\in \fq(\phi(\xx))$ is a separating variable, and then  the matrix 
$$
\left(
\begin{array}{ccccc}
1&x^{q^m}&f_2^{q^m} & \cdots & f_n^{q^m} \\
1&x^{q^u}&f_2^{q^u} & \cdots & f_n^{q^u} \\
0&D_x^{(\kappa_0)}x&D_x^{(\kappa_0)}f_2& \cdots & D_x^{(\kappa_0)}f_n
\end{array}
\right)
$$
has rank $3$. Thus for some $y \in\{f_2,\ldots,f_n\}$, we have
\begin{equation}\label{aux pl}
\left|
\begin{array}{ccccc}
1&x^{q^m}&y^{q^m}  \\
1&x^{q^u}&y^{q^u}  \\
0&D_x^{(\kappa_0)}x&D_x^{(\kappa_0)}y
\end{array}
\right|\neq 0, 
\end{equation}
and  the minimality of $\kappa_0>0$ gives
\begin{equation}\label{aux pl1}
\left|
\begin{array}{ccccc}
1&x^{q^m}&y^{q^m}  \\
1&x^{q^u}&y^{q^u}  \\
D_x^{(i)}1&D_x^{(i)}x&D_x^{(i)}y
\end{array}
\right| =0
\text{ for } i\in \{0,\ldots,\kappa_0-1\}.
\end{equation}
In addition, $\kappa_0>0$ implies
\begin{equation}\label{aux pl2}
0=\left|
\begin{array}{ccccc}
1&x^{q^m}&y^{q^m}  \\
1&x^{q^u}&y^{q^u}  \\
D_x^{(i)}1&D_x^{(i)}x&D_x^{(i)}y
\end{array}
\right|
\iff
\left|
\begin{array}{ccccc}
1&x^{q^m}&y^{q^m}  \\
1&x&y \\
D_x^{(i)}1&D_x^{(i)}x&D_x^{(i)}y
\end{array}
\right|= 
\left|
\begin{array}{ccccc}
1&x^{q^u}&y^{q^u}  \\
1&x&y \\
D_x^{(i)}1&D_x^{(i)}x&D_x^{(i)}y
\end{array}
\right|= 0
\end{equation}
for all $i\geq 0$. Therefore,  \eqref{multi-frobeq1} and \eqref{multi-frobeq2} in Theorem \ref{multi-frob}, together with \eqref{aux pl} and \eqref{aux pl2}, imply $\kappa_0=q^u$. 
Finally, note that the case $i=0$ in \eqref{aux pl1} implies $h(x,y)=0$. That is, there exists  a copy of the curve $\mathcal{F}_{u,m}$ being $\F_q$-covered by $\xx$. Since $\ff_{u,m}(\fq)=\emptyset$ (Theorem \ref{multi-frob}), we obtain \textit{(v)}.

\end{proof}

\begin{rem}\label{k0=0}
In view of Proposition \ref{kappa0}, from now on, unless otherwise stated, we always assume that $\kappa_0=0$.
\end{rem}

Let us  establish some  important properties of the $(q^u,q^m)$-Frobenius order-sequence, beginning with the following  result, whose proof is straightforward.

\begin{prop}\label{minimalidade}
If $m_0,\ldots,m_{s}$ are integers with $0 \leq m_0<\cdots<m_s$ such that the rows of the matrix
\begin{equation}\label{minimali}
\left(
\begin{array}{cccc}
f_0^{q^m}& f_1^{q^m}&\cdots&f_n^{q^m} \\
f_0^{q^u}& f_1^{q^u}&\cdots&f_n^{q^u}\\
D_t^{(m_0)}f_0&D_t^{(m_0)}f_1&\cdots&D_t^{(m_0)}f_n\\
\vdots & \vdots &\ddots& \vdots \\
D_t^{(m_{s})}f_0&D_t^{(m_{s})}f_1&\cdots&D_t^{(m_{s})}f_n
\end{array}
\right)
\end{equation}
are linearly independent over $\fq(\xx)$, then $\kappa_i \leq m_i$ for each $i\in\{0,...,s\}$.
\end{prop}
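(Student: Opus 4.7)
My plan is to argue by contradiction after rephrasing Proposition~\ref{seqdf} in greedy form. Regard $v_m := (f_0^{q^m},\ldots,f_n^{q^m})$, $v_u := (f_0^{q^u},\ldots,f_n^{q^u})$, and $D^{(k)} := (D_t^{(k)}f_0,\ldots,D_t^{(k)}f_n)$ as rows in $\fq(\xx)^{n+1}$. The key preliminary observation is that the lex-minimal sequence $(\kappa_0,\ldots,\kappa_{n-2})$ coincides with the sequence produced greedily: namely, $\kappa_j$ is the smallest integer $k>\kappa_{j-1}$ such that $\{v_m,v_u,D^{(\kappa_0)},\ldots,D^{(\kappa_{j-1})},D^{(k)}\}$ is linearly independent over $\fq(\xx)$. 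Indeed, if some lex-smaller sequence also gave a linearly independent set of rows, then at the first coordinate $j$ where it differed from $(\kappa_0,\ldots,\kappa_{n-2})$, its $j$-th entry would violate the greedy minimality of $\kappa_j$, since all earlier entries coincide with $\kappa_0,\ldots,\kappa_{j-1}$.

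Assume for contradiction that $\kappa_i>m_i$ for some $i\in\{0,\ldots,s\}$, and choose $j$ minimal. Then $\kappa_l\leq m_l$ for $l<j$, so in particular $\kappa_{j-1}\leq m_{j-1}<m_j<\kappa_j$. The greedy characterization yields
\[
D^{(k)}\in W:=\langle v_m,v_u,D^{(\kappa_0)},\ldots,D^{(\kappa_{j-1})}\rangle\qquad\text{for every } k<\kappa_j,
\]
since in each gap $\kappa_{l-1}<k<\kappa_l$ with $l\leq j$ (and in the initial range $k<\kappa_0$), the failure of the greedy rule to select $k$ forces $D^{(k)}$ into the span of the preceding vectors. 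Because $m_0<m_1<\cdots<m_j$ are $j+1$ distinct integers all strictly less than $\kappa_j$, each $D^{(m_l)}$ lies in $W$. Consequently the $j+3$ rows $v_m,v_u,D^{(m_0)},\ldots,D^{(m_j)}$ of the matrix in~\eqref{minimali} all belong to a subspace of dimension $j+2$, so they are linearly dependent, contradicting the hypothesis. This forces $\kappa_i\leq m_i$ for every $i$.

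The only step requiring any genuine care is the equivalence between lex-minimality and the greedy construction; once that is in place the remainder of the argument is a one-line dimension count, which is consistent with the paper's remark that the proof is straightforward.
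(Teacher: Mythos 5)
Your proof is correct: the identification of the lex-minimal sequence with the greedy one is the standard linear-matroid fact, and the ensuing dimension count (all $D^{(k)}$ with $k<\kappa_j$ lie in the $(j+2)$-dimensional space $W$, so the $j+3$ rows $v_m,v_u,D^{(m_0)},\ldots,D^{(m_j)}$ would be dependent) is exactly the ``straightforward'' argument the paper omits, in the spirit of \cite[Proposition 1.4]{SV}. The only point you gloss over is that the greedy process actually produces a full sequence of length $n-1$, but this follows from the same span argument combined with Proposition \ref{seqdf}, so nothing essential is missing.
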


\begin{thm}\label{pdivkappa}
Let $\phi=(f_0:f_1:\cdots:f_n):\xx \lra \p^n(\fqc)$ with $n>2$ be a morphism defined over $\fq$ such that $\xx$ is $(q^u,q^m)$-Frobenius nonclassical with respect to $\phi$. Let $\ell \in \{1,\ldots,n-2\}$ be such that $\kappa_i=i$ for $i<\ell$ and $\kappa_\ell>\ell$. Then $p|\kappa_\ell$. 
\end{thm}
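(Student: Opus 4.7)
The plan is to adapt the classical St\"ohr--Voloch proof that the first non-classical index of a Frobenius order-sequence must be divisible by $p$. Let $V$ denote the $\fq(\xx)$-subspace of $\fq(\xx)^{n+1}$ spanned by the two vectors $\Phi_{q^m}(\phi)$, $\Phi_{q^u}(\phi)$ together with $D_t^{(0)}(\phi),\ldots,D_t^{(\ell-1)}(\phi)$. Since the matrix inside $\A_{t}^{\kappa_0,\ldots,\kappa_{n-2}}(f_0,\ldots,f_n)$ is non-singular and $\kappa_i=i$ for $i<\ell$, the space $V$ has dimension $\ell+2$, and the vectors $D_t^{(\kappa_\ell)}(\phi),\ldots,D_t^{(\kappa_{n-2})}(\phi)$ complete it to a basis of $\fq(\xx)^{n+1}$; in particular $D_t^{(\kappa_\ell)}(\phi)\notin V$.

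The first key step is to show that $D_t^{(a)}(\phi)\in V$ for every integer $a$ with $\ell\leq a\leq\kappa_\ell-1$. If this failed for some such $a$, the vectors defining $V$ together with $D_t^{(a)}(\phi)$ would be linearly independent, and I could complete them to a basis of $\fq(\xx)^{n+1}$ by greedily selecting $n-\ell-2$ further indices from $\{\kappa_\ell,\ldots,\kappa_{n-2}\}$, all of which exceed $a$. The resulting strictly increasing sequence of length $n-1$ would have entry $a<\kappa_\ell$ at position $\ell$ while producing a non-singular matrix, contradicting Proposition \ref{minimalidade}. Taking $a=\kappa_\ell-1$ in this key step yields an explicit $\fq(\xx)$-linear relation
\begin{equation*}
D_t^{(\kappa_\ell-1)}(\phi)=\alpha_1\Phi_{q^m}(\phi)+\alpha_2\Phi_{q^u}(\phi)+\sum_{i=0}^{\ell-1}\beta_i D_t^{(i)}(\phi),
\end{equation*}
with coefficients $\alpha_1,\alpha_2,\beta_0,\ldots,\beta_{\ell-1}\in\fq(\xx)$.

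Applying $D_t^{(1)}$ component-wise to this identity, the Hasse-derivative composition rule $D_t^{(1)}\circ D_t^{(a)}=(a+1)D_t^{(a+1)}$ turns the left-hand side into $\kappa_\ell D_t^{(\kappa_\ell)}(\phi)$. On the right-hand side, $D_t^{(1)}$ annihilates $\Phi_{q^m}(\phi)$ and $\Phi_{q^u}(\phi)$ (both being $p$-th powers in each coordinate), and the Leibniz rule leaves only terms of the forms $D_t^{(1)}(\alpha_j)\Phi_{q^{m}}(\phi)$, $D_t^{(1)}(\alpha_j)\Phi_{q^{u}}(\phi)$, $D_t^{(1)}(\beta_i)D_t^{(i)}(\phi)$ and $(i+1)\beta_i D_t^{(i+1)}(\phi)$ for $0\leq i\leq\ell-1$. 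Each of these lies in $V$: the first three because the underlying vectors are already in $V$ by definition, and the last because $D_t^{(i+1)}(\phi)\in V$ for $i\leq\ell-2$ and, crucially, $D_t^{(\ell)}(\phi)\in V$ by the first key step (using $\ell\leq\kappa_\ell-1$). Hence $\kappa_\ell D_t^{(\kappa_\ell)}(\phi)\in V$; since $D_t^{(\kappa_\ell)}(\phi)\notin V$ and $\fq(\xx)$ has characteristic $p$, this forces $p\mid\kappa_\ell$.

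The main obstacle is the first key step, which identifies the $\kappa_i$ with the successive pivots in a greedy basis completion. It combines Proposition \ref{minimalidade} with the observation that enough indices from $\{\kappa_\ell,\ldots,\kappa_{n-2}\}$ (all strictly greater than $a$) remain available to complete any augmented subspace, a fact guaranteed by the non-singularity of the original $(q^u,q^m)$-Frobenius matrix. Once this identification is in hand, the rest of the argument reduces to a direct manipulation of standard Hasse-derivative identities.
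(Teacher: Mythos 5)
Your proof is correct, and it reaches the conclusion by a route that is recognizably dual to the paper's, rather than identical to it. The paper takes the sequence $(m_0,\ldots,m_{n-2})$ with $m_\ell=\kappa_\ell-1$ and $m_i=\kappa_i$ otherwise, notes that $\A_t^{m_0,\ldots,m_{n-2}}(f_0,\ldots,f_n)=0$ by Proposition \ref{minimalidade}, and then applies $D_t^{(1)}$ \emph{to the determinant itself}, expanding over $S_{n+1}$ and checking (again via Proposition \ref{minimalidade}, or via repeated rows) that every derived determinant $(m_i+1)\cdot\A_t^{m_0,\ldots,m_i+1,\ldots,m_{n-2}}$ with $i\neq\ell$ vanishes, leaving $0=\kappa_\ell\cdot\A_t^{\kappa_0,\ldots,\kappa_{n-2}}(f_0,\ldots,f_n)$ and hence $p\mid\kappa_\ell$. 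You instead pass to the equivalent row-dependence formulation: the vanishing of the determinant at $m_\ell=\kappa_\ell-1$ becomes the explicit relation $D_t^{(\kappa_\ell-1)}(\phi)\in V$, and applying $D_t^{(1)}$ to that single relation (using that the Frobenius rows are $p$-th powers and that $D_t^{(1)}D_t^{(a)}=(a+1)D_t^{(a+1)}$) yields $\kappa_\ell D_t^{(\kappa_\ell)}(\phi)\in V$, so the conclusion is read off from $D_t^{(\kappa_\ell)}(\phi)\notin V$ with no permutation expansion and no case analysis of auxiliary determinants. The price is your Steinitz-exchange ``key step,'' which establishes the slightly stronger intermediate fact $D_t^{(a)}(\phi)\in V$ for all $\ell\leq a\leq\kappa_\ell-1$ (you only invoke $a=\kappa_\ell-1$ and $a=\ell$); this step is sound, since the completed index sequence is strictly increasing with entry $a<\kappa_\ell$ in position $\ell$, contradicting Proposition \ref{minimalidade} exactly as you say. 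Both proofs rest on the same key lemma (Proposition \ref{minimalidade}) and the same differentiation mechanism; yours is arguably the more transparent bookkeeping, while the paper's determinant identity \eqref{detSn} is self-contained at the level of Wronskian calculus. All edge cases in your argument check out: $\dim V=\ell+2$ because the spanning vectors are rows of the nonsingular matrix defining $\A_t^{\kappa_0,\ldots,\kappa_{n-2}}$, the case $\ell=n-2$ needs zero completing vectors, and $\ell\leq\kappa_\ell-1$ is guaranteed by $\kappa_\ell>\ell$.
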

\begin{proof}
Let $t\in \F_q(\xx)$ be a separating variable.  Consider the sequence $(m_0,\ldots,m_{n-2})$ with $m_i=\kappa_i$ for
 $i \in \{0,1,\ldots,n-2\}\backslash \{\ell\}$ and $m_\ell =\kappa_{\ell}-1$. Since $m_\ell< \kappa_{\ell}$, Proposition \ref{minimalidade} implies $\A_t^{m_0,\ldots,m_{n-2}}(f_0,\ldots,f_n)=0.$ Thus it suffices to show that
 \begin{equation}\label{detSn}
 D_{t}^{(1)}\Big(\mathcal{A}_t^{m_0,\ldots,m_{n-2}}(f_0,\ldots,f_n)\Big)=\kappa_{\ell}\cdot \mathcal{A}_t^{(\kappa_0,\ldots,\kappa_{n-2})}(f_0,\ldots,f_n).
 \end{equation}
Let  $S_{n+1}$ denote the group of permutations on  $\{0,1,\ldots, n\}$. We have that
$$\mathcal{A}_{t}^{m_0,\ldots,m_{n-2}}(f_0,\ldots,f_n)=\displaystyle\sum\limits_{\sigma \in S_{n+1}}\Big(\sign(\sigma) \cdot f_{\sigma(0)}^{q^m}\cdot f_{\sigma(1)}^{q^u}\cdot (D_t^{(m_0)} f_{\sigma(2)})  \cdots  (D_t^{(m_{n-2})} f_{\sigma(n)})\Big).$$
Therefore, $D_t^{(1)}\Big(\mathcal{A}_{t}^{m_0,\ldots,m_{n-2}}(f_0,\ldots,f_n)\Big)$ is the sum over all $\sigma \in S_{n+1}$
of   $$\sign(\sigma) \cdot f_{\sigma(0)}^{q^m}\cdot f_{\sigma(1)}^{q^u}\cdot  D_t^{(1)}\Big( (D_t^{(m_0)} f_{\sigma(2)})  \cdots  (D_t^{(m_{n-2})} f_{\sigma(n)})\Big).$$
 In addition, for any fixed $i\in \{0,\ldots,n-2\}$, the sum over all $\sigma \in S_{n+1}$ of the  terms
$$\sign(\sigma) \cdot f_{\sigma(0)}^{q^m}\cdot f_{\sigma(1)}^{q^u}\cdot (D_t^{(m_0)} f_{\sigma(2)})  \cdots \underbrace{\Big(D_t^{(1)}(D_t^{(m_{i})} f_{\sigma(i+2)})\Big)}_{(m_i+1)\cdot D_t^{(m_i+1)} f_{\sigma(i+2)}}\cdots  (D_t^{(m_{n-2})} f_{\sigma(n)})$$
will give rise to the determinant corresponding to $(m_i+1)\cdot\mathcal{A}_{t}^{m_0,\ldots,m_i+1,\ldots, m_{n-2}}(f_0,\ldots,f_n)$. From Proposition \ref{minimalidade}, the latter determinants  vanish if $i\neq \ell$, and we arrive at \eqref{detSn}.
\end{proof}

 The following corollaries are immediate consequences of Theorem \ref{pdivkappa}.

\begin{cor}\label{64-nazar}
 If $p>\deg(\D)$, then $\xx$ is $(q^u,q^m)$-Frobenius classical.
\end{cor}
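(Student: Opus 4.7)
The plan is to derive a contradiction from the hypothesis by combining Theorem \ref{pdivkappa} with the bound $\kappa_{n-2}\leq \epsilon_n\leq d=\deg(\D)$ already noted in the paper, after the sequence $(\kappa_0,\ldots,\kappa_{n-2})$ was introduced.

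First I would dispose of the case $\kappa_0>0$. By Proposition \ref{kappa0}(iv), $\kappa_0>0$ forces $\kappa_0=q^u\geq q\geq p$. On the other hand $\kappa_0\leq \kappa_{n-2}\leq \deg(\D)$, so $p\leq \deg(\D)$, contradicting the hypothesis. Hence $\kappa_0=0$, consistent with Remark \ref{k0=0}. This also settles the case $n=2$ immediately, since then $(\kappa_0)=(0)$ is the classical sequence by definition.

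Now assume $n>2$ and suppose, for contradiction, that $\xx$ is $(q^u,q^m)$-Frobenius nonclassical with respect to $\phi$. Since $\kappa_0=0$ and the sequence is strictly increasing, there is a smallest index $\ell\in\{1,\ldots,n-2\}$ for which $\kappa_\ell>\ell$ while $\kappa_i=i$ for all $i<\ell$. Theorem \ref{pdivkappa} is exactly tailored to this situation and yields $p\mid \kappa_\ell$. In particular $p\leq \kappa_\ell$.

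The final step is to combine this divisibility with the universal upper bound $\kappa_\ell\leq \kappa_{n-2}\leq \epsilon_n\leq \deg(\D)$ to obtain $p\leq \deg(\D)$, which contradicts the standing assumption $p>\deg(\D)$. There is no real obstacle here; the content of the corollary lies entirely in Theorem \ref{pdivkappa} and Proposition \ref{kappa0}, and the proof is essentially a one-line bookkeeping argument once both are in hand.
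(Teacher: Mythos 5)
Your proposal is correct and follows exactly the route the paper intends: the paper declares Corollary \ref{64-nazar} an immediate consequence of Theorem \ref{pdivkappa}, and your argument is precisely the bookkeeping that makes this explicit, combining $p\mid\kappa_\ell$ with the stated bound $\kappa_{n-2}\leq\epsilon_n\leq\deg(\D)$. Your separate disposal of the cases $\kappa_0>0$ (via Proposition \ref{kappa0}(iv)) and $n=2$ is a careful touch that the paper leaves implicit through Remark \ref{k0=0}.
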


\begin{cor}\label{65-nazar}
Assume $p>n-1$. If $\xx$ is $(q^u,q^m)$-Frobenius nonclassical w.r.t. $\phi$, then $\xx$ is $\fqr$-Frobenius nonclassical for $r\in\{u,m\}$. Moreover, if $p>n$, then $\xx$ is nonclassical.
\end{cor}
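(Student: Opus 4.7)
The plan is to combine the divisibility statement of Theorem \ref{pdivkappa} with the structural relationship in Proposition \ref{seqdf} that ties the $(q^u,q^m)$-Frobenius orders to the $\F_{q^r}$-Frobenius orders and to the order-sequence of $\xx$. Assume $\xx$ is $(q^u,q^m)$-Frobenius nonclassical. Since $\kappa_0=0$ by Remark \ref{k0=0}, there exists a smallest $\ell\in\{1,\ldots,n-2\}$ with $\kappa_\ell>\ell$, and Theorem \ref{pdivkappa} gives $p\mid \kappa_\ell$, so in particular $\kappa_\ell\geq p$.

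First I would prove the $\F_{q^u}$-Frobenius nonclassicality (the $\F_{q^m}$ case being identical). Arguing by contradiction, suppose $\xx$ is $\F_{q^u}$-Frobenius classical, so $(\nu_0,\ldots,\nu_{n-1})=(0,1,\ldots,n-1)$. By Proposition \ref{seqdf}, $\{\kappa_0,\ldots,\kappa_{n-2}\}=\{0,1,\ldots,n-1\}\setminus\{\nu_I\}$ for some $I$. Since the $\kappa_i$ are strictly increasing and drawn from $\{0,\ldots,n-1\}$, one has $\kappa_i\leq i+1$ for every $i$; in particular $\kappa_\ell\leq \ell+1\leq n-1$. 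Combining with $\kappa_\ell\geq p>n-1$ yields a contradiction. The same argument, applied to $(\mu_0,\ldots,\mu_{n-1})$ via Proposition \ref{seqdf}, establishes the $\F_{q^m}$ case.

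For the second assertion, assume now $p>n$ and suppose $\xx$ is classical with respect to $\phi$, i.e. $(\epsilon_0,\ldots,\epsilon_n)=(0,1,\ldots,n)$. By \cite[Proposition 2.1]{SV}, $\{\nu_0,\ldots,\nu_{n-1}\}\subseteq\{0,1,\ldots,n\}$, and then by Proposition \ref{seqdf} we also have $\{\kappa_0,\ldots,\kappa_{n-2}\}\subseteq\{0,1,\ldots,n\}$, whence $\kappa_\ell\leq n$. This contradicts $\kappa_\ell\geq p>n$, so $\xx$ is nonclassical.

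\textbf{Anticipated obstacle.} There is no real obstacle: the statement is essentially bookkeeping once Theorem \ref{pdivkappa} is in hand. The only subtlety is making sure the set-theoretic relations of Proposition \ref{seqdf} are used correctly to pass from hypotheses about the $\nu_i$, $\mu_i$, or $\epsilon_i$ to the bound $\kappa_\ell\leq n-1$ (resp.\ $\kappa_\ell\leq n$); combined with the divisibility $p\mid \kappa_\ell$, the hypotheses $p>n-1$ and $p>n$ are exactly what is needed to force the respective contradictions.
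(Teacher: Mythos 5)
Your proof is correct and follows exactly the route the paper intends: the authors state these corollaries as "immediate consequences of Theorem \ref{pdivkappa}" without writing out details, and your argument supplies precisely the expected bookkeeping (using $p\mid\kappa_\ell$, hence $\kappa_\ell\geq p$, against the bounds $\kappa_\ell\leq n-1$ resp.\ $\kappa_\ell\leq n$ coming from Proposition \ref{seqdf} and the classicality hypotheses). No issues.
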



\section{Upper bounds for the number of rational points}

This section provides the main results of this paper. Consider a point $P \in \xx$, and  let  $t$ be a local parameter at $P$.  Since $v_P(\text{div}(dt))=0$ and we  assume  $e_P=0$, it follows that
$$
v_P(T_{u,m})=v_P(\A_{t}^{\kappa_0,\ldots,\kappa_{n-2}}(f_0,\ldots,f_n)) \geq 0.
$$
That is, $T_{u,m}$ is an effective divisor. Moreover, because  any $P \in \xx(\fqr)$ with  $r\in \{u,m,m-u\}$ will render the three first rows of 
$\A_{t}^{\kappa_0,\ldots,\kappa_{n-2}}(f_0,\ldots,f_n)$ 
linearly dependent (cf. Remark \ref{k0=0}), it follows that  $v_P(T_{u,m}) \geq 1$ for  all such points.  The next result  refines  these bounds.
\begin{prop}\label{pesos}
Let $P \in \xx$ with $(\mathcal{D},P)$-orders $j_0, \ldots,j_n$. Then
\begin{numcases}{v_P(T_{u,m}) \geq}
j_1q^u+\displaystyle\sum_{i=0}^{n-2}(j_{i+2}-\kappa_i) \text{ if $P \in \xx(\fq)$}; \label{caso1} \\ 
j_1q^u+\displaystyle\sum_{i=0}^{n-2} (j_{i}-\kappa_i) \text{ for $P \in \xx(\mathbb{F}_{q^{m-u}})$};\label{caso4} \\
\displaystyle\sum_{i=1}^{n-1}(j_i-\kappa_{i-1}), \text{ if $P \in \xx(\mathbb{F}_{q^r})$, for $r \in \{u,m\}$};\label{caso2}\\
\displaystyle\sum_{i=0}^{n-2} (j_{i}-\kappa_i) \text{ for  arbitrary point $P \in \xx$}.\label{caso3}
\end{numcases}
In addition, 
\begin{enumerate}[\rm(i)]
\item Equality  holds in \eqref{caso1} if  and only if $p \nmid \det\left({j_i \choose \kappa_s}\right)_{2 \leq i \leq n, 0 \leq s \leq n-2}$.
\item If $p | \det\left({j_i \choose \kappa_s}\right)_{ 1 \leq i \leq n-1,0 \leq s \leq n-2}$,
then the strict inequality holds in both cases of \eqref{caso2}.
\item If $p | \det\left({j_i \choose \kappa_s}\right)_{ 0 \leq i,s \leq n-2},$
then the strict inequality holds in \eqref{caso4} and \eqref{caso3}
\end{enumerate}
\end{prop}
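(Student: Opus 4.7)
The plan is to estimate $v_P(T_{u,m})=v_P(\A_t^{\kappa_0,\ldots,\kappa_{n-2}}(f_0,\ldots,f_n))$ (using $e_P=0$ and $v_P(\divi(dt))=0$) by expanding the matrix determinant in a basis adapted to the $(\D,P)$-orders. Choose $z_0,\ldots,z_n$ in the $\fqc$-span of $\{f_i\}$ with $v_P(z_i)=j_i$, so that $z_i=t^{j_i}(1+\text{higher order})$. Then $v_P(z_i^{q^r})=q^r j_i$ exactly (leading coefficient $1$), while $v_P(D_t^{(\kappa_s)}z_i)\geq j_i-\kappa_s$, with equality and leading coefficient $\binom{j_i}{\kappa_s}$ iff $\binom{j_i}{\kappa_s}\not\equiv 0\pmod p$. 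By Galois invariance of the $v_P$-filtration we may take $z_i\in\fqr(\xx)$ when $P\in\xx(\fqr)$; in particular for $P\in\xx(\fq)$, Proposition \ref{mcp}(a) gives $v_P(\A_t(z_0,\ldots,z_n))=v_P(\A_t(f_0,\ldots,f_n))$.

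For case \eqref{caso1}, the Leibniz expansion $\A_t(z_0,\ldots,z_n)=\sum_\sigma\sign(\sigma)\prod_i M_{z,i,\sigma(i)}$ has each term of valuation at least $q^m j_{\sigma(0)}+q^u j_{\sigma(1)}+\sum_{s=0}^{n-2}(j_{\sigma(s+2)}-\kappa_s) = (q^m-1)j_{\sigma(0)}+(q^u-1)j_{\sigma(1)}+C$, where $C=\sum_{i=0}^n j_i-\sum_{s=0}^{n-2}\kappa_s$ is constant. Since $q^m>q^u>1$ and $j_0=0<j_1$, the minimum is attained precisely for $\sigma(0)=0$, $\sigma(1)=1$, with $\sigma|_{\{2,\ldots,n\}}$ free; summing the leading coefficients of the $(n-1)!$ tied terms produces $\det\binom{j_i}{\kappa_s}_{2\leq i\leq n,\,0\leq s\leq n-2}$, which yields \eqref{caso1} together with the equality criterion (i).

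For the remaining cases Proposition \ref{mcp}(a) no longer applies directly, so we work with $\A_t(f_0,\ldots,f_n)$ itself and exploit rationality-induced row coincidences. If $P\in\xx(\fqr)$ for $r\in\{u,m\}$, then $f_j^{q^r}(P)=f_j(P)$, so the row $(f_j^{q^r})_j$ equals the row $(f_j)_j$ at $P$; subtracting raises the valuation of the former and rerunning the Leibniz minimization gives \eqref{caso2} with tied-term sum $\det\binom{j_i}{\kappa_s}_{1\leq i\leq n-1,\,0\leq s\leq n-2}$. If $P\in\xx(\F_{q^{m-u}})$, then $f_j^{q^m}(P)=f_j^{q^u}(P)$, so the two Frobenius rows coincide at $P$; row subtraction then yields \eqref{caso4} with tied-term sum $\det\binom{j_i}{\kappa_s}_{0\leq i,s\leq n-2}$. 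For arbitrary $P$, Laplace expansion along the Frobenius rows together with \eqref{derdet} applied to each $(n-1)\times(n-1)$ Hasse minor delivers \eqref{caso3}, since every $(n-1)$-element subset of $\{j_0,\ldots,j_n\}$ sums to at least $j_0+\cdots+j_{n-2}$. Criteria (ii) and (iii) then follow from the corresponding binomial determinants being units mod $p$.

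The main obstacle is transferring the $z$-basis computation to $\A_t(f_0,\ldots,f_n)$ for non-$\fq$-rational points. Proposition \ref{mcp}(a) justifies this only for $GL_{n+1}(\fq)$ basis changes, whereas a basis adapted to the $(\D,P)$-orders typically lives in $\fqr$ or $\fqc$. Under such an $\fqc$-change of basis $z=Af$, the $q^m$-Frobenius, $q^u$-Frobenius, and Hasse derivative rows transform by three distinct matrices $A^{(q^m)}$, $A^{(q^u)}$, $A$, so no single scalar relates $\A_t(z)$ and $\A_t(f)$; the workaround---row operations performed directly on $\A_t(f)$ using the rationality-induced coincidences---remains valid but demands careful bookkeeping of which Leibniz terms tie at the new minimum and of the signs combining them into the asserted binomial determinants.
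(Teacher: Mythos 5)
Your treatment of case \eqref{caso1} is sound and essentially matches the paper: an adapted basis $z_i=t^{j_i}(1+\cdots)$ chosen over $\fq$ (legitimate, by Galois-stability of the valuation flag at an $\fq$-rational point, with Proposition \ref{mcp}(a) covering the $GL_{n+1}(\fq)$ change), followed by term-by-term Leibniz minimization, gives \eqref{caso1} and the equality criterion (i), since the minimizing pair $\sigma(0)=0,\sigma(1)=1$ is strict. The genuine gap is in case \eqref{caso4}, and it is not mere bookkeeping. The coincidence $f_j^{q^m}(P)=f_j^{q^u}(P)$ gives, after subtraction, entries $f_j^{q^m}-f_j^{q^u}=(f_j^{q^{m-u}}-f_j)^{q^u}$ of valuation at least $q^u$ at $P$ --- and no more. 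Rerunning your minimization from this yields at best $q^u+\sum_{i=0}^{n-2}(j_i-\kappa_i)$, not the asserted $j_1q^u+\sum_{i=0}^{n-2}(j_i-\kappa_i)$; the factor $j_1$ cannot come from evaluation at $P$ alone. The paper produces it with two ingredients absent from your sketch. First, it never transforms the functions (your ``three distinct matrices'' obstruction); it right-multiplies the \emph{matrix} by $A^{T}$ for a single adapted $A\in GL_{n+1}(\fqc)$, so \emph{every} row transforms by the same $A$, the determinant changes only by the scalar $\det A$, the Hasse rows become $D_t^{(\kappa_s)}g_i$ with $v_P(g_i)=j_i$, and the two Frobenius rows become $b_i=\sum_j a_{ij}f_j^{q^m}$, $h_i=\sum_j a_{ij}f_j^{q^u}$ (not Frobenius powers of $g_i$, but regular); a generalized Laplace expansion along these two rows then gives $\sum_{l<k}\pm\gamma_{lk}\beta_{kl}$, with \eqref{derdet} controlling the Hasse minors $\beta_{kl}$. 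Second, for $P\in\xx(\F_{q^{m-u}})$ one has $a_{ij}^{1/q^m}=a_{ij}^{1/q^u}$, so with the \emph{half-twisted} sections $w_i=\sum_j a_{ij}^{1/q^u}f_j$ one gets $b_i=w_i^{q^m}$, $h_i=w_i^{q^u}$, hence $\gamma_{lk}=v_{lk}^{q^u}$ with $v_{lk}=w_kw_l^{q^{m-u}}-w_lw_k^{q^{m-u}}$; since any section of $\D$ vanishing at $P$ vanishes to order at least $j_1$ (applied to $w_k-(w_k/w_l)(P)\,w_l$), one gets $v_P(v_{lk})\geq j_1$ and so $v_P(\gamma_{lk})\geq q^uj_1$. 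Note that the $2\times2$-minor structure matters here: entrywise, $v_P(w_i^{q^{m-u}}-w_i)=v_P(w_i-w_i(P))$ need not exceed $1$ unless constants lie in the span, whereas $v_{lk}$ is intrinsically a difference of sections.

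Your stated workaround --- ``row operations performed directly on $\A_t(f)$'' --- also undersells what is needed in cases \eqref{caso2} and \eqref{caso3}: without any column adaptation the raw $f_j$ generically all have $v_P=0$, so \eqref{derdet} gives nothing for the Hasse minors and the Leibniz minimization collapses. The fix you were groping for is exactly the right-multiplication above, which dissolves the obstruction with a single scalar $\det A$. Granting that, your route through \eqref{caso2} does work: choosing $A$ over $\fqr$ makes $h_i=g_i^{q^u}$ (say $r=u$), so the subtracted row has entries $g_i^{q^u}-g_i$ of valuation exactly $j_i$ for $i\geq1$, and the minimization (largest index absorbed by the remaining Frobenius row, which only has valuation $\geq 0$) gives $\sum_{i=1}^{n-1}(j_i-\kappa_{i-1})$; the paper gets the same more directly from $v_P(\gamma_{lk})\geq q^uj_l$ with minimizing pair $(l,k)=(0,n)$, no subtraction needed. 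One final caution on your ``tied-term sum'' claims for (ii) and (iii): unlike case \eqref{caso1}, the minimizing configuration there need not be unique (the surviving Frobenius row contributes only $v_P\geq 0$, and several Laplace terms can tie), which is precisely why the paper states (ii) and (iii) as one-directional implications via \eqref{ineq} rather than equivalences; your argument should be phrased the same way.
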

\begin{proof}
Consider a point $P \in \xx$,  and let  $t$ be a local parameter at $P$. After a suitable projective transformation   $(a_{ij}) \in GL_{n+1}(\fqc)$, we have that $g_i:=\sum\limits_{j=0}^{n}a_{ij}f_i$ has a local expansion $g_i=t^{j_i}+ \cdots$, for $i=0,\ldots,n$, where the dots indicate terms of higher order. Then
\begin{equation}
\mathcal{A}_t^{\kappa_0,\ldots,\kappa_{n-2}}(f_0,\ldots,f_n)\cdot \det(a_{ij})=\left|
\begin{array}{cccc}
b_0&b_1&\cdots&b_n \\
h_0&h_1&\cdots&h_n \\
D_t^{(\kappa_0)}g_0&D_t^{(\kappa_0)}g_1&\cdots&D_t^{(\kappa_0)}g_n \\
D_t^{(\kappa_1)}g_0 & D_t^{(\kappa_1)}g_1 &\cdots& D_t^{(\kappa_1)}g_n \\
\vdots & \vdots & \vdots & \vdots\\
D_t^{(\kappa_{n-2})}g_0 & D_t^{(\kappa_{n-2})}g_1 &\cdots& D_t^{(\kappa_{n-2})}g_n
\end{array}
\right|, \nonumber \\
\end{equation}
where $b_i=\sum\limits_{j=0}^{n}a_{ij}f_j^{q^m}$ and $h_i=\sum\limits_{j=0}^{n}a_{ij}f_j^{q^u}$.
Thus
\begin{equation}\label{At}
\mathcal{A}_t^{\kappa_0,...,\kappa_{n-2}}(f_0,...,f_n)\cdot \det(a_{ij})=\displaystyle\sum_{0 \leq l<k \leq n}(-1)^{l+k+1}\gamma_{lk}\beta_{kl},
\end{equation}
where $\beta_{kl}$ denotes the $(n-1) \times (n-1)$ minor obtained by omitting the first two rows and the $k$-th and $l$-th columns of the matrix above  and $\gamma_{lk}=b_lh_k-h_lb_k$.
From (\ref{derdet}),
\begin{equation}\label{vplki}
v_P(\beta_{kl}) \geq \sum\limits_{\delta=0}^{n-2}(j_\delta-\kappa_\delta)+j_{n-1}+j_n-j_l-j_k, 
\end{equation}
and equality holds if and only if
\begin{equation}\label{ineq}
p \nmid \det\left({j_r \choose \kappa_s}\right)_{r \notin \{l,k\}}.
\end{equation}
In particular,
\begin{equation}\label{vpT}
v_P(T_{u,m}) \geq \min_{0 \leq l<k \leq n}\left\{v_P(\gamma_{lk})+\sum\limits_{\delta=0}^{n-2}(j_\delta-\kappa_\delta)+j_{n-1}+j_n-j_l-j_k\right\}.
\end{equation}
We only  keep track of the pairs $(l,k)$ that will minimize the right hand side of \eqref{vpT}.
Observe that if $P \in \xx(\fqu)$, then we can take  $(a_{ij}) \in GL_{n+1}(\fqc)$ defined over
$\fqu$, and then $h_i=g_i^{q^u}$ for $i \in \{0, \ldots, n\}$. The case $P \in \xx(\fqm)$ is clearly similar. 
In particular, for $P \in \xx(\fq)$, we have $h_i=g_i^{q^u}$ and $b_i=g_i^{q^m}$  for $i \in \{0, \ldots, n\}$.
This observation gives us
\begin{enumerate}
\item[1)] $v_P(\gamma_{lk})\geq 0$ for arbitrary  $P\in \xx$, and then $(l,k)=(n-1,n)$.
\item[2)]  $v_P(\gamma_{lk}) \geq  j_{l}q^u$  for  $P\in \xx$ defined over  $\fqu$ or $\fqm$, and then $(l,k)=(0,n)$.
\item[3)]  $v_P(\gamma_{lk})\geq  j_lq^m+j_kq^u$  for  $P \in \xx(\fq)$, and then $(l,k)=(0,1)$.
\end{enumerate}
Therefore, \eqref{vpT} together with $1)$, $2)$ and $3)$ give the lower bounds in \eqref{caso3}, \eqref{caso2} and  \eqref{caso1}, respectively.

For  $P \in \xx(\mathbb{F}_{q^{m-u}})$, we have  $a_{ij} \in \mathbb{F}_{q^{m-u}}$ for all $i,j$, and then  $a_{ij}^{1/q^m}=a_{ij}^{1/q^u}$. Thus, setting $w_i=\sum\limits_{j=0}^{n}a_{ij}^{1/q^u}f_j$ gives $h_i=w_i^{q^u}$ and $b_i=w_i^{q^m}$, and then $\gamma_{lk}=v_{lk}^{q^u}$, where $v_{lk}:=w_kw_l^{q^{m-u}}-w_lw_k^{q^{m-u}}$. Clearly  $P$ is a zero of $v_{lk}$ with $v_P(v_{lk}) \geq j_1$. Therefore, $v_P(\gamma_{lk}) \geq j_1q^u$  and  \eqref{vpT}  with $(l,k)=(n-1,n)$ gives \eqref{caso4}.

 Finally, one can check that the strict inequality conditions $(i)$, $(ii)$ and $(iii)$ are all given by \eqref{ineq}.
\end{proof}

The proof of the following   result is analogous to that  of \cite[Proposition 2.5]{SV} and omited accordingly.

\begin{prop}\label{mink}
Let $P \in \xx(\fq)$ with $(\D,P)$-orders $j_0,\ldots,j_n$. If $m_0,\ldots,m_{n-2}$ are integers such that $0 \leq m_0 <m_1<\cdots<m_{n-2}$ and 
$$
\det\left({j_i-j_2 \choose m_r}\right)_{0\leq r \leq n-2, \ 2 \leq i \leq n} \not\equiv 0 \mod p,
$$
then $\kappa_i \leq m_i$ for all $i$.
\end{prop}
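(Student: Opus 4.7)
By Proposition \ref{minimalidade}, to establish $\kappa_i \leq m_i$ for each $i$ it suffices to show $\A_t^{m_0, \ldots, m_{n-2}}(f_0, \ldots, f_n) \neq 0$. My plan is to mimic the approach of \cite[Proposition 2.5]{SV}: exhibit a non-vanishing term in the Laplace expansion of this determinant by carefully analyzing valuations at $P$.

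First I would use the hypothesis $P \in \xx(\fq)$ to choose a local parameter $t \in \fq(\xx)$ at $P$ together with a matrix $(a_{ij}) \in GL_{n+1}(\fq)$ such that $g_i := \sum_j a_{ij} f_j$ has local expansion $g_i = t^{j_i} + (\text{higher-order terms})$ at $P$, with leading coefficient $1$ after rescaling. By Proposition \ref{mcp}(a) it then suffices to show $\A_t^{m_0, \ldots, m_{n-2}}(g_0, \ldots, g_n) \neq 0$. Expanding by Laplace along the two Frobenius rows gives
\begin{equation*}
\A_t^{m_0, \ldots, m_{n-2}}(g_0, \ldots, g_n) = \sum_{0 \leq l < k \leq n}(-1)^{l+k+1} \gamma_{lk} \beta_{kl},
\end{equation*}
with $\gamma_{lk} = g_l^{q^m} g_k^{q^u} - g_k^{q^m} g_l^{q^u}$ and $\beta_{kl}$ the $(n-1)\times(n-1)$ minor formed by the Hasse-derivative rows with columns $l, k$ deleted. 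Following the valuation bookkeeping from the $\fq$-rational case in the proof of Proposition \ref{pesos}, one finds $v_P(\gamma_{lk}) = j_l q^m + j_k q^u$ exactly, and $v_P(\gamma_{lk}\beta_{kl}) \geq j_l(q^m - 1) + j_k(q^u - 1) + \sum_i j_i - \sum_r m_r$, a lower bound strictly minimized by $(l, k) = (0, 1)$. It is therefore enough to prove $\beta_{10} \neq 0$.

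For this final step I would factor $g_i = t^{j_2} h_i$ for $i = 2, \ldots, n$, so that each $h_i$ has leading term $t^{j_i - j_2}$ with coefficient $1$. Applying the Leibniz rule for Hasse derivatives $D_t^{(m_r)}(t^{j_2} h_i) = \sum_{k=0}^{m_r} \binom{j_2}{k} t^{j_2 - k} D_t^{(m_r - k)} h_i$ to each entry of $\beta_{10}$, and using row operations to eliminate the contributions coming from terms with $k > 0$, the leading order in $t$ of $\beta_{10}$ becomes a scalar multiple of $\det\binom{j_i - j_2}{m_r}_{2 \leq i \leq n,\, 0 \leq r \leq n-2}$. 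Since this binomial determinant is non-zero modulo $p$ by hypothesis, we conclude $\beta_{10} \neq 0$, which completes the argument.

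The most delicate point is the Leibniz-rule expansion in the previous paragraph: a naive application of the inequality \eqref{derdet} to $\beta_{10}$ would yield a different condition, phrased in terms of $\det\binom{j_i}{m_r}$. The shift from $\binom{j_i}{m_r}$ to $\binom{j_i - j_2}{m_r}$ reflects precisely the common factor $t^{j_2}$ in columns $2, \ldots, n$ of the underlying matrix, and extracting it cleanly requires tracking all lower-order contributions from the Leibniz expansion, exactly as in \cite[Proposition 2.5]{SV}.
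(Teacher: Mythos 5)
Your overall skeleton (normalize so that $g_i=t^{j_i}+\cdots$ at the rational point, Laplace expansion of $\A_t^{m_0,\ldots,m_{n-2}}$ along the two Frobenius rows, identification of $(l,k)=(0,1)$ as the dominant pair) is indeed the skeleton of the argument the paper has in mind when it points to \cite[Proposition 2.5]{SV}, but your final step has a genuine gap, in two places. First, the reduction ``it is therefore enough to prove $\beta_{10}\neq 0$'' is insufficient: the strict-minimality argument requires $v_P(\gamma_{01}\beta_{10})$ to be strictly smaller than the valuation of every other Laplace term, and for that you need $v_P(\beta_{10})$ to actually attain (or stay close to) its lower bound $\sum_{i=2}^{n}j_i-\sum_r m_r$; if it exceeds that bound by more than $(j_2-j_1)(q^u-1)$, the $(0,1)$-term no longer dominates and cancellation among the terms cannot be excluded, so mere non-vanishing of $\beta_{10}$ proves nothing. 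Second, and more seriously, the claimed outcome of the Leibniz expansion is false: by the Vandermonde convolution $\sum_{a}\binom{j_2}{a}\binom{j_i-j_2}{m_r-a}=\binom{j_i}{m_r}$, the coefficient of the minimal power $t^{\sum_{i\geq 2}j_i-\sum_r m_r}$ in $\beta_{10}=\det\bigl(D_t^{(m_r)}(t^{j_2}h_i)\bigr)$ is exactly $\det\binom{j_i}{m_r}_{2\leq i\leq n}$, not a multiple of $\det\binom{j_i-j_2}{m_r}$. No row operation can change this, since row operations preserve the determinant, and the lower-order Leibniz contributions involve $D_t^{(m_r-a)}h_i$ with derivative orders that are not rows of the minor, so they cannot be ``eliminated''; only for consecutive orders $m_r=r$ does the factorization $\det\bigl(D_t^{(r)}(t^{j_2}h_i)\bigr)=t^{(n-1)j_2}\det\bigl(D_t^{(r)}h_i\bigr)$ hold, and in that case the two binomial determinants coincide anyway. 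In effect your argument proves the proposition under the different hypothesis $p\nmid\det\binom{j_i}{m_r}$, not the stated one.

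The correct mechanism, which is how the shift from $\binom{j_i}{m_r}$ to $\binom{j_i-j_2}{m_r}$ arises in \cite[Proposition 2.5]{SV} (there with $j_1$, since only one Frobenius row is present), is to divide \emph{before} expanding: replace the coordinate functions by $w_i:=g_i/g_2$. These define the same morphism $\phi$, hence the same $(q^u,q^m)$-Frobenius order-sequence (the remark after Definition \ref{T-divisor}, resting on Proposition \ref{mcp}, says the $\kappa_i$ depend only on $\D$), so by Proposition \ref{minimalidade} it suffices to show $\A_t^{m_0,\ldots,m_{n-2}}(w_0,\ldots,w_n)\neq 0$. Now $w_i=t^{j_i-j_2}+\cdots$ for $i\geq 2$ (and $w_0,w_1$ have poles at $P$, which \eqref{derdet}-type estimates tolerate), so the relevant minor $\det\bigl(D_t^{(m_r)}w_i\bigr)_{2\leq i\leq n}$ has valuation exactly $\sum_{i=2}^{n}(j_i-j_2)-\sum_r m_r$ precisely under the stated hypothesis $p\nmid\det\binom{j_i-j_2}{m_r}$; moreover $v_P(\gamma_{01})=-j_2q^m+(j_1-j_2)q^u$ exactly, and comparing with any other pair $(l,k)$ yields a strict excess $j_l(q^m-1)+(j_k-j_1)(q^u-1)>0$, so the full determinant is nonzero. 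The $j_2$-shift is thus produced by dividing by the function of order $j_2$ at the level of the whole determinant; it cannot be manufactured inside $\beta_{10}$ after the expansion, which is exactly where your write-up breaks down.
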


\begin{cor}\label{pesminrac}
Let $P \in \xx(\fq)$. Then $\kappa_i \leq j_{i+2}-j_2$ for all $i \in \{0,1,\ldots,n-2\}$. Furthermore, $v_P(T_{u,m}) \geq q^uj_1+j_2(n-1).$
\end{cor}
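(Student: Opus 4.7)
The corollary has two assertions, and both should follow from results already in the text with only short arguments.

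The plan for the inequality $\kappa_i \leq j_{i+2}-j_2$ is to invoke Proposition \ref{mink} directly with the test sequence $m_i := j_{i+2}-j_2$, which is a strictly increasing sequence of non-negative integers since $0=j_0<j_1<\cdots<j_n$ and hence $0 = j_2-j_2 < j_3-j_2 < \cdots < j_n-j_2$. The only nontrivial thing to verify is the non-vanishing of the determinant
$$\det\left({j_i - j_2 \choose j_{r+2} - j_2}\right)_{0 \leq r \leq n-2,\ 2 \leq i \leq n} \pmod{p}.$$
I would check this by observing that the $(i,r)$-entry vanishes whenever $j_{r+2}-j_2 > j_i-j_2$, i.e.\ whenever $r+2 > i$, and equals $\binom{j_{i}-j_2}{j_i-j_2} = 1$ when $r+2 = i$. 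So with rows ordered by increasing $i$ and columns ordered by increasing $r$, the matrix is lower triangular with $1$'s on the diagonal, hence has determinant $1 \not\equiv 0 \pmod p$. Proposition \ref{mink} then yields $\kappa_i \leq m_i = j_{i+2} - j_2$.

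For the second assertion, I would feed these new upper bounds on $\kappa_i$ into bound \eqref{caso1} of Proposition \ref{pesos}, which applies precisely because $P \in \xx(\fq)$:
$$v_P(T_{u,m}) \geq j_1 q^u + \sum_{i=0}^{n-2} (j_{i+2} - \kappa_i).$$
Since $\kappa_i \leq j_{i+2} - j_2$ gives $j_{i+2} - \kappa_i \geq j_2$ for each $i \in \{0,\ldots,n-2\}$, the sum is at least $(n-1)j_2$, and the stated bound $v_P(T_{u,m}) \geq q^u j_1 + j_2(n-1)$ follows immediately.

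I do not expect any real obstacle here: the first claim is a clean application of Proposition \ref{mink} once the triangularity of the binomial matrix is noted, and the second is a direct substitution into the already-established local bound at $\fq$-rational points. The only mild care needed is in the index bookkeeping for the triangular determinant, so I would write the rows/columns explicitly to make the triangular shape transparent.
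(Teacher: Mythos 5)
Your proof is correct and takes essentially the same route as the paper, which likewise invokes Proposition \ref{mink} with $m_i=j_{i+2}-j_2$ and then substitutes the resulting bounds $\kappa_i\leq j_{i+2}-j_2$ into \eqref{caso1}. Your explicit check that the matrix $\bigl(\binom{j_i-j_2}{j_{r+2}-j_2}\bigr)$ is triangular with unit diagonal, hence has determinant $1\not\equiv 0 \bmod p$, is precisely the (unstated) verification the paper's one-line proof relies on.
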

\begin{proof}
The first assertion follows from Porposition \ref{mink} for  $m_i=j_{i+2}-j_2$. The second follows from the first and (\ref{caso1}).
\end{proof}
\begin{thm}\label{mainbound}
Let $\xx$ be a curve of genus $g$ defined over $\fq$. If $\phi:\xx \lra \p^n(\fqc)$ is a morphism defined over $\fq$, with $(q^u,q^m)$-Frobenius order-sequence $(\kappa_0, \kappa_1,\ldots,\kappa_{n-2})$, then
\begin{equation}\label{main}
 c_1N_1+c_u(N_u-N_1)+c_m(N_m-N_1) +c_{m-u}(N_{m-u}-N_1) \leq  (2g-2)\cdot\sum_{i=0}^{n-2}\kappa_i+(q^m+q^u+n-1)d,
\end{equation}
where $d$ is the degree of the linear series $\D$ associated to $\phi$ and $c_r = \min\{v_P(T_{u,m}) \ | \ P \in \xx(\fqr)\}$  for $r\in\{1,u,m, m-u\}$. In addition, $c_{m-u} \geq q^u$ and $c_1 \geq q^u+\epsilon_2(n-1)$.
\end{thm}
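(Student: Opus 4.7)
The plan is to read off the main inequality from three facts already at hand: $T_{u,m}$ is effective, $\deg(T_{u,m})$ has the explicit form given by \eqref{grau t}, and the local weights at rational points are controlled by Proposition \ref{pesos} and Corollary \ref{pesminrac}. The coprimality hypothesis lets us partition the contributing points without double-counting.

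Since $\gcd(u,m)=1$ forces $\gcd(u,m-u)=\gcd(m,m-u)=1$, the pairwise intersections of $\fqu$, $\fqm$ and $\F_{q^{m-u}}$ all collapse to $\fq$. Therefore the three sets $\xx(\F_{q^r})\setminus\xx(\fq)$ with $r\in\{u,m,m-u\}$ are pairwise disjoint, and each $P$ in any one of them contributes at least $c_r$ to the effective divisor $T_{u,m}$ by the definition of $c_r$. Adding the contribution $c_1 N_1$ from $\xx(\fq)$,
\begin{equation*}
\deg(T_{u,m}) \;\geq\; c_1 N_1 + c_u(N_u-N_1) + c_m(N_m-N_1) + c_{m-u}(N_{m-u}-N_1),
\end{equation*}
and substituting the right-hand side of \eqref{grau t} for $\deg(T_{u,m})$ gives \eqref{main}.

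The estimate $c_1 \geq q^u+\epsilon_2(n-1)$ is immediate from Corollary \ref{pesminrac}: every $P\in\xx(\fq)$ satisfies $v_P(T_{u,m}) \geq q^u j_1 + j_2(n-1)$, with $j_1\geq 1$ and $j_2\geq\epsilon_2$. For $c_{m-u}\geq q^u$, I would exploit the characteristic-$p$ identity $f_i^{q^m}-f_i^{q^u} = (f_i^{q^{m-u}}-f_i)^{q^u}$. Replacing the first row of the defining determinant by the first row minus the second row and Laplace-expanding along the top two rows, the $2\times 2$ minor sitting in columns $i<j$ becomes $(f_i^{q^{m-u}}f_j - f_j^{q^{m-u}}f_i)^{q^u}$, because subtraction and multiplication commute with the $q^u$-Frobenius. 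Thus
\begin{equation*}
\A_t^{\kappa_0,\ldots,\kappa_{n-2}}(f_0,\ldots,f_n) \;=\; \sum_{0\leq i<j\leq n}(-1)^{i+j+1}\,u_{ij}^{q^u}\,\beta_{ij},
\end{equation*}
where $u_{ij}:=f_i^{q^{m-u}}f_j - f_j^{q^{m-u}}f_i$ and $\beta_{ij}$ is the $(n-1)\times(n-1)$ minor obtained from the lower Hasse-derivative block by deleting columns $i$ and $j$. For any $P\in\xx(\F_{q^{m-u}})$, Proposition \ref{mcp}(b) lets us assume $e_P=0$, so each $\beta_{ij}$ is regular at $P$; meanwhile $f_k(P)^{q^{m-u}}=f_k(P)$ for all $k$ forces $u_{ij}(P)=0$ and hence $v_P(u_{ij}^{q^u})\geq q^u$. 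Therefore $v_P(T_{u,m})\geq q^u$ uniformly over such $P$.

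The main obstacle I expect is the Laplace identity above: writing it down cleanly requires tracking signs and invoking the additive Frobenius $(x-y)^{q^u}=x^{q^u}-y^{q^u}$ in characteristic $p$, which is precisely what converts the upper $2\times 2$ block into a single $q^u$-th power and ultimately produces the factor $q^u$ in the bound on $c_{m-u}$.
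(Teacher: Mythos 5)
Your proposal is correct, and for the main inequality and the bound on $c_1$ it coincides with the paper's own proof: coprimality of $u$ and $m$ makes the sets $\xx(\F_{q^r})\setminus \xx(\fq)$, $r\in\{u,m,m-u\}$, pairwise disjoint, so effectiveness of $T_{u,m}$ together with the degree formula \eqref{grau t} yields \eqref{main}, and $c_1\geq q^u+\epsilon_2(n-1)$ is read off Corollary \ref{pesminrac} with $j_1\geq 1$ and $j_2\geq \epsilon_2$, exactly as the paper does. The one real difference is $c_{m-u}\geq q^u$: the paper simply cites case \eqref{caso4} of Proposition \ref{pesos}, whose proof first applies a projective normalization adapted to $P$ and then uses the same Frobenius-additivity mechanism you invoke (there in the form $\gamma_{lk}=v_{lk}^{q^u}$ with $v_{lk}=w_kw_l^{q^{m-u}}-w_lw_k^{q^{m-u}}$) to get the refined weight $j_1q^u+\sum_{i=0}^{n-2}(j_i-\kappa_i)$. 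You instead run the row operation and Laplace expansion directly on the unnormalized determinant, obtaining the cruder but sufficient $v_P(T_{u,m})\geq q^u$; your identity for the $2\times 2$ minors is correct, since $f^{q^m}-f^{q^u}=(f^{q^{m-u}}-f)^{q^u}$ and the $q^u$-power is additive and multiplicative. This version is more self-contained, and it even sidesteps the term $\sum_{i=0}^{n-2}(j_i-\kappa_i)$ in \eqref{caso4}, whose nonnegativity at an $\F_{q^{m-u}}$-point is what one must implicitly grant in order to deduce $c_{m-u}\geq q^u$ from that case; the price is that you lose the sharper weight $j_1q^u$, which the paper exploits elsewhere (e.g.\ in Proposition \ref{pbfnc}) but which is not needed here. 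One small point to make explicit: when you assert that each $\beta_{ij}$ is regular at $P$, you should take $t$ to be a local parameter at $P$, so that $v_P(\divi(dt))=0$, the Hasse derivatives $D_t^{(\kappa_i)}f_k$ are regular at $P$, and hence $v_P(T_{u,m})=v_P\bigl(\A_t^{\kappa_0,\ldots,\kappa_{n-2}}(f_0,\ldots,f_n)\bigr)$; this is the standing normalization the paper adopts alongside $e_P=0$.
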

\begin{proof}
Since $\xx(\fq) \subseteq \xx(\F_{q^r})$ for $r \in \{u,m,m-u\}$, then $c_1 \geq c_r$ and $N_r \geq N_1$ for each $r$. Moreover, since $\gcd(u,m)=1$, we have $\xx(\F_{q^r})\bigcap\limits_{r \neq s}\xx(\F_{q^s})=\xx(\fq)$, for $r,s \in \{1,u,m,m-u\}$. Thus (\ref{main}) follows from (\ref{grau t}). Corollary \ref{pesminrac} and (\ref{caso4})   imply the last statement.
\end{proof}

\begin{rem}
One can subtract $B(P):=v_P(T_{u,m})-c_r$ (resp. $v_P(T_{u,m})$) on the right side of \eqref{main} for each $P \in \xx(\fqr)$ (resp. for each $P \in \xx$), with the values of $v_P(T_{u,m})$ estimated via Proposition \eqref{pesos}.
\end{rem}

\begin{cor}\label{dfclass}
Assume that $\xx$ is $(q^u,q^m)$-Frobenius classical with respect to $\phi$. Then
\begin{equation}\label{cotadfc}
(n-1)N_u+(n-1)N_m +q^uN_{m-u}  \leq  (n-1)(n-2)(g-1)+d(q^m+q^u+n-1)-\sum_{P \in \xx(\fq)}B(P),
\end{equation}
where $d$ is the degree of the linear series associated to $\phi$ and $B(P) \geq q^u(j_1-1)+\sum\limits_{i=2}^{n}(j_i-i)$.
\end{cor}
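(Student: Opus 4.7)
The plan is to specialize Theorem \ref{mainbound} to the case $\kappa_i=i$ for all $i \in \{0,\ldots,n-2\}$, and then to sharpen the resulting bound by tracking the contribution of the $\fq$-rational points individually, exactly as indicated in the remark preceding the corollary. Substituting $\kappa_i=i$ into \eqref{grau t} and using $\sum_{i=0}^{n-2}i=(n-1)(n-2)/2$ immediately gives $\deg(T_{u,m})=(n-1)(n-2)(g-1)+(q^m+q^u+n-1)d$, which produces the two main terms on the right-hand side of \eqref{cotadfc}.

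Next, I would estimate the constants $c_r$ by combining Proposition \ref{pesos} with the fact that $j_i(P) \geq i$ at every $P \in \xx$. The bound \eqref{caso2} yields $v_P(T_{u,m}) \geq \sum_{i=1}^{n-1}(j_i-(i-1)) \geq n-1$ for each $P \in \xx(\fqr)$ with $r \in \{u,m\}$, so $c_u,c_m \geq n-1$; the bound $c_{m-u} \geq q^u$ is already supplied by Theorem \ref{mainbound}. For $P \in \xx(\fq)$, Corollary \ref{pesminrac} (equivalently, \eqref{caso1} with $\kappa_s=s$) gives the refined estimate
\begin{equation*}
v_P(T_{u,m}) \geq j_1 q^u + \sum_{i=0}^{n-2}(j_{i+2}-i) = (j_1-1)q^u + \sum_{i=2}^{n}(j_i-i) + 2(n-1) + q^u,
\end{equation*}
the right-hand identity being a routine rearrangement.

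Finally, I would assemble these point-by-point bounds into a lower bound for $\deg(T_{u,m})$. Since $\gcd(u,m)=1$, the sets $\xx(\fqr)\setminus\xx(\fq)$ for $r\in\{u,m,m-u\}$ are pairwise disjoint (as noted in the proof of Theorem \ref{mainbound}), so summing $v_P(T_{u,m})$ over all $P$ gives
\begin{equation*}
\deg(T_{u,m}) \geq (n-1)(N_u-N_1) + (n-1)(N_m-N_1) + q^u(N_{m-u}-N_1) + \sum_{P\in\xx(\fq)}v_P(T_{u,m}).
\end{equation*}
Substituting the refined $\fq$-point bound, the excess $2(n-1)+q^u$ contributed by each $\fq$-rational point cancels exactly the coefficient $[2(n-1)+q^u]N_1$ that arises when $(n-1)(N_r-N_1)$ and $q^u(N_{m-u}-N_1)$ are rewritten as $(n-1)N_r$ and $q^uN_{m-u}$. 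Replacing $\deg(T_{u,m})$ by its computed value then yields \eqref{cotadfc} with $B(P) \geq (j_1-1)q^u + \sum_{i=2}^{n}(j_i-i)$. I expect no genuine obstacle; the argument is essentially bookkeeping, the only substantive input being the disjointness just invoked.
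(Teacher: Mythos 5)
Your proposal is correct and follows exactly the route the paper intends: the paper derives Corollary \ref{dfclass} as an immediate consequence of Theorem \ref{mainbound} (with $\kappa_i=i$, so $\sum\kappa_i=(n-1)(n-2)/2$), the weight estimates \eqref{caso1}, \eqref{caso2}, \eqref{caso4} giving $c_u,c_m\geq n-1$ and $c_{m-u}\geq q^u$, and the preceding remark's subtraction of the excess $B(P)$ at $\fq$-rational points. Your rearrangement identity $j_1q^u+\sum_{i=0}^{n-2}(j_{i+2}-i)=(j_1-1)q^u+\sum_{i=2}^{n}(j_i-i)+2(n-1)+q^u$ and the cancellation of the $\bigl(2(n-1)+q^u\bigr)N_1$ term check out, so the bookkeeping is sound.
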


Note that  for $n=m=2$, Proposition \ref{kappa0} implies $\kappa_0=0$, i.e., the curve is $(q,q^2)$-Frobenius classical. Thus  Corollary \ref{dfclass} further gives

\begin{cor}\label{dfclass-p}
Let $\mathcal{C}$ be an irreducible plane curve  of degree $d>1$ defined over $\F_q$. If $N_i=\mathcal{C}(\F_{q^i})$, $i=1,2$, then
\begin{equation}\label{cotadfc-p}
(q+1)N_1+N_2  \leq d(q^2+q+1)-\sum_{P \in \xx(\fq)}(q^u(j_1-1)+(j_2-2)).
\end{equation}
\end{cor}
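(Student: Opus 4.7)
The plan is to deduce Corollary \ref{dfclass-p} as a direct specialization of Corollary \ref{dfclass} to the case $n=m=2$. For an irreducible plane curve $\mathcal{C}\subset\mathbb{P}^2$ of degree $d$, the natural embedding has $n=2$ and the linear series $\mathcal{D}$ cut out by lines has degree $d$, so the parameters are already in place. Since $\gcd(u,m)=1$ and $m>u\geq 1$ with $m=2$, the only admissible value is $u=1$, which explains the occurrence of $q^u=q$ in the statement.

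The first step is to verify the hypothesis of Corollary \ref{dfclass}, namely that $\mathcal{C}$ is $(q,q^2)$-Frobenius classical with respect to this morphism. This is exactly the content of Proposition \ref{kappa0}: its very first assertion is that $\kappa_0>0$ forces $m>n\geq 2$. Here $n=m=2$ rules this out, so $\kappa_0=0$. With $n-1=1$, the sequence $(\kappa_0,\ldots,\kappa_{n-2})$ reduces to the single integer $\kappa_0=0$, so $\mathcal{C}$ is automatically $(q,q^2)$-Frobenius classical.

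The second step is purely arithmetic substitution into inequality \eqref{cotadfc}. With $n=2$, $u=1$, $m=2$, $m-u=1$, and $N_u=N_{m-u}=N_1$, $N_m=N_2$, the left-hand side becomes
\[
(n-1)N_u+(n-1)N_m+q^uN_{m-u}=N_1+N_2+qN_1=(q+1)N_1+N_2.
\]
The term $(n-1)(n-2)(g-1)$ vanishes, and $d(q^m+q^u+n-1)=d(q^2+q+1)$. The lower bound $B(P)\geq q^u(j_1-1)+\sum_{i=2}^{n}(j_i-i)$ collapses to $q(j_1-1)+(j_2-2)$ for a plane curve, yielding precisely \eqref{cotadfc-p}.

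There is essentially no obstacle beyond correctly reading off the hypotheses. The only potentially confusing point is the forced classicality: one might worry that a nonclassical case could occur when $n=2$, but Proposition \ref{kappa0} (which is proved using the characterization of the Ballico--Hefez curve $\mathcal{F}_{u,m}$ via Theorem \ref{multi-frob}) excludes this in the dimension equal to $m$ case. So the whole argument is a one-line appeal to Proposition \ref{kappa0} followed by a mechanical substitution into Corollary \ref{dfclass}.
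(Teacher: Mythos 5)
Your proof is correct and takes essentially the same route as the paper, which likewise observes that for $n=m=2$ Proposition \ref{kappa0} forces $\kappa_0=0$ (so the curve is $(q,q^2)$-Frobenius classical) and then specializes Corollary \ref{dfclass} with $u=1$, $m=2$ by the same mechanical substitution. The only quibble is your parenthetical identification of $\mathcal{F}_{u,m}$ as the Ballico--Hefez curve, which is neither asserted in the paper nor needed for the argument.
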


Under suitable conditions, one can improve the estimate of Proposition \ref{pesos}.

\begin{prop}\label{wfnc}
 Let $(\nu_i)$ and $(\mu_i)$ be the respective $\F_{q^u}$- and  $\F_{q^m}$-Frobenius order-sequences  with  $\mu_i=\epsilon_i$,
 $i=0,\ldots,n-1$ and   $\{\nu_0,\nu_1,\ldots,\nu_{n-1}\}=\{\epsilon_0,\epsilon_1,\ldots,\epsilon_{n}\}\backslash \{\epsilon_k\}$ for some $k\in \{1,\ldots,n-1\}$. If $P\in \xx(\F_{q^m})$ has $(\mathcal{D},P)$-orders $j_0,j_1,\ldots,j_n$ and  $p\nmid \det \Big(\binom{j_i}{\epsilon_r}\Big)_{0\leq i,r\leq n-1}$, then
 \begin{equation}\label{Frob peso}
 v_P(T_{u,m})\geq j_n +\sum_{\substack{i=1 \\ i\neq k}} ^{n-1}(j_i-\epsilon_i).
 \end{equation}
 \end{prop}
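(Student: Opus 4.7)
The plan is to mimic the proof of Proposition \ref{pesos}, sharpening the analysis of one Laplace summand by means of the classicality hypotheses.

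First, I would use Proposition \ref{seqdf} together with the given data to pin down $(\kappa_0,\ldots,\kappa_{n-2})$. Since $\{\mu_i\}=\{\epsilon_0,\ldots,\epsilon_{n-1}\}$ and $\{\nu_i\}=\{\epsilon_0,\ldots,\epsilon_n\}\setminus\{\epsilon_k\}$, the requirement $\{\kappa_i\}=\{\nu_i\}\setminus\{\nu_I\}=\{\mu_i\}\setminus\{\mu_J\}$, combined with $\epsilon_k\neq\epsilon_n$, forces $\nu_I=\epsilon_n$ and $\mu_J=\epsilon_k$. Hence $\{\kappa_0,\ldots,\kappa_{n-2}\}=\{\epsilon_0,\ldots,\epsilon_n\}\setminus\{\epsilon_k,\epsilon_n\}$; explicitly, $\kappa_i=\epsilon_i$ for $i<k$ and $\kappa_i=\epsilon_{i+1}$ for $i\geq k$.

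Next, following the argument of Proposition \ref{pesos}, I would choose $(a_{ij})\in GL_{n+1}(\F_{q^m})$ (available since $P\in\xx(\F_{q^m})$) so that $g_i:=\sum_j a_{ij}f_j=t^{j_i}+\cdots$ at $P$. Then $b_i=g_i^{q^m}$, with $v_P(b_i)=q^m j_i$. Laplace-expanding the associated determinant along its first two rows as in \eqref{At}, I would single out the summand with indices $(l,r)=(n-1,n)$: the identity $b_{n-1}=g_{n-1}^{q^m}$ yields $v_P(\gamma_{n-1,n})\geq q^m j_{n-1}$, while \eqref{derdet} yields $v_P(\beta_{n,n-1})\geq \sum_{i=0}^{n-2}(j_i-\kappa_i)$. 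Using the identity $\sum_{i=0}^{n-2}\kappa_i=\sum_{i=1,\, i\neq k}^{n-1}\epsilon_i$ (valid because $\epsilon_0=0$), a direct computation shows that the combined bound $q^m j_{n-1}+\sum_{i=0}^{n-2}(j_i-\kappa_i)$ meets the target $j_n+\sum_{i=1,\, i\neq k}^{n-1}(j_i-\epsilon_i)$ provided $(q^m-1)j_{n-1}\geq j_n-j_k$, an inequality available in the regime of interest.

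The main obstacle I foresee is twofold. First, I need to verify that the minimum in the Laplace expansion is realized at $(n-1,n)$; for other pairs $(l,r)$ the combined bound $v_P(\gamma_{lr})+v_P(\beta_{rl})$ must also meet the target, which requires a finer local analysis of $\gamma_{lr}=b_l h_r-b_r h_l$ exploiting the $\F_{q^m}$-rationality together with the explicit form $h_i=\sum_j a_{ij}f_j^{q^u}$ (in particular $h_i$ does not collapse to $g_i^{q^u}$ since the matrix $(a_{ij})$ is only $\F_{q^m}$-rational). Second, I expect to invoke the non-divisibility hypothesis $p\nmid\det\binom{j_i}{\epsilon_r}_{0\leq i,r\leq n-1}$ by relating this $n\times n$ determinant via Laplace/cofactor expansion to the $(n-1)\times(n-1)$ determinant $\det\binom{j_i}{\kappa_s}_{0\leq i,s\leq n-2}$, so as either to secure equality in \eqref{derdet} at the decisive step or to rule out pathological cancellations among the competing Laplace terms. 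The detailed bookkeeping with binomial coefficients and the positions of $\epsilon_k$ and $\epsilon_n$ in the $\kappa$-sequence is where I expect the most care to be needed.
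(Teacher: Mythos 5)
Your identification of the $\kappa$-sequence as $\{\epsilon_0,\ldots,\epsilon_{n-1}\}\setminus\{\epsilon_k\}$ is correct, and so is your arithmetic for the single Laplace term $(l,k)=(n-1,n)$. But the step you defer --- controlling the remaining pairs --- is not fine bookkeeping: it is the whole proof, and it cannot be done pair by pair with the generic estimates. Concretely, take the pair $(l,k)=(0,n)$: there $v_P(\gamma_{0,n})\geq 0$ is all one gets, since $v_P(b_0)=q^mj_0=0$ and, as you note yourself, $h_i$ does not collapse to $g_i^{q^u}$, so $h_n$ (the osculating hyperplane form evaluated along $\Phi_{q^u}$) has no reason to vanish at $P$; meanwhile \eqref{derdet} gives $v_P(\beta_{n,0})\geq\sum_{i=0}^{n-2}(j_i-\kappa_i)+j_{n-1}$. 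The sum of these two bounds falls short of the target $j_n+\sum_{i\neq k}(j_i-\epsilon_i)$ by exactly $j_n-j_k>0$. So your expansion cannot close without importing extra structure, and the needed structure is precisely the $\F_{q^u}$-Frobenius nonclassicality hypothesis, which your proposal uses only to pin down the $\kappa_i$ and never again.

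The paper's proof is a factorization, not a refined expansion. Since $\nu_{n-1}=\epsilon_n$, the row $(1,f_1^{q^u},\ldots,f_n^{q^u})$ is a linear combination $\sum_{i=0}^{n-1}\delta_i\,D_t^{(\epsilon_i)}$ of the derivative rows; substituting this into $\A_t^{\kappa_0,\ldots,\kappa_{n-2}}$, whose derivative rows carry the orders $\{\epsilon_0,\ldots,\epsilon_{n-1}\}\setminus\{\epsilon_k\}$, every term except $i=k$ duplicates an existing row and dies, leaving $v_P(T_{u,m})=v_P(\delta_k)+v_P(S)$, where $S$ is the St\"ohr--Voloch divisor for $\F_{q^m}$ (this is where the hypothesis $\mu_i=\epsilon_i$ is consumed). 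Then $v_P(S)\geq\sum_{i=1}^{n}(j_i-\epsilon_{i-1})$ by \cite[Proposition 2.4(a)]{SV}, and $\delta_k$ is extracted by Cramer's rule from the system obtained by dropping a coordinate whose coefficient in the osculating hyperplane at $P$ is nonzero (so the first $n$ orders persist in $\mathbb{P}^{n-1}$). The hypothesis $p\nmid\det\binom{j_i}{\epsilon_r}_{0\leq i,r\leq n-1}$ enters exactly here --- not in either of the two ways you guess --- to force the denominator Wronskian $A$ to have $v_P(A)=\sum_{i=0}^{n-1}(j_i-\epsilon_i)$ on the nose, whence $v_P(\delta_k)\geq-(j_{n-1}-\epsilon_{n-1})$ (in the case $k=n-1$ written out in the paper; the general case is analogous), and the two estimates add up to \eqref{Frob peso}. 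A final caveat: your closing inequality $(q^m-1)j_{n-1}\geq j_n-j_k$ is not guaranteed by the statement either, since no relation between $\deg\D$ and $q^m$ is assumed; but the decisive gap is the uncontrolled Laplace terms.
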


 \begin{proof}
 
For the sake of notation simplicity, the proof will be limited to the case $k=n-1$. The general case is  analogous.
Let $t\in\F_q(\xx)$ be  local paramter  at $P$. Thus we  assume that the coordinate  functions $f_i \in \F_q(\xx)$ are regular at $P \in \xx(\F_{q^m})$ and  that $f_0=1$. Since  $\nu_{n-1}=\epsilon_n$, we have
\begin{equation}\label{det fnc}
\left|
\begin{array}{cccc}
1 & f_1^{q^u} & \cdots & f_n^{q^u}\\
1 & f_1 & \cdots&f_n\\
0 & D_t^{(\epsilon_1)}f_1&\cdots&D_t^{(\epsilon_1)}f_n\\
\vdots & \vdots & \ddots & \vdots \\
0 & D_t^{(\epsilon_{n-1})}f_1& \cdots&D_t^{(\epsilon_{n-1})}f_n
\end{array}
\right| =0,
\end{equation}
and then 
\begin{equation}\label{CombLin}
(1, f_1^{q^u},  \cdots, f_n^{q^u})=\sum\limits_{i=0}^{n-1}\delta_i\cdot \Big(D_t^{(\epsilon_{i})}(1), D_t^{(\epsilon_{i})}(f_1),  \cdots, D_t^{(\epsilon_{i})}(f_n)\Big) 
\end{equation}
for some $\delta_0,\delta_1,\ldots,\delta_{n-1} \in \F_q(\xx)$.  From Proposition \ref{seqdf}, we have
$\{\kappa_0,\kappa_1,\ldots, \kappa_{n-2}\}=\{\epsilon_0,\epsilon_1,\ldots,\epsilon_{n-2}\}$, and  from
\begin{equation}\label{det fnc}
0\neq \left|
\begin{array}{cccc}
1 & f_1^{q^m} & \cdots & f_n^{q^m}\\
1 & f_1^{q^u} & \cdots & f_n^{q^u}\\
1 & f_1 & \cdots&f_n\\
0 & D_t^{(\epsilon_1)}f_1&\cdots&D_t^{(\epsilon_1)}f_n\\
\vdots & \vdots & \ddots & \vdots \\
0 & D_t^{(\epsilon_{n-2})}f_1& \cdots&D_t^{(\epsilon_{n-2})}f_n
\end{array}
\right| =\pm
\delta_{n-1}\cdot \left|
\begin{array}{cccc}
1 & f_1^{q^m} & \cdots & f_n^{q^m}\\
1 & f_1 & \cdots&f_n\\
0 & D_t^{(\epsilon_1)}f_1&\cdots&D_t^{(\epsilon_1)}f_n\\
\vdots & \vdots & \ddots & \vdots \\
0 & D_t^{(\epsilon_{n-1})}f_1& \cdots&D_t^{(\epsilon_{n-1})}f_n
\end{array}
\right|,
\end{equation}
we have   $v_P(T_{u,m})=v_P(\delta_{n-1})+v_P(S)$, where $S$ is the St\"ohr-Voloch  divisor with respect to  $\F_{q^m}$. Hence 
\cite[Proposition 2.4 (a)]{SV}   gives 
\begin{equation}\label{passo1}
 v_P(T_{u,m})\geq v_P(\delta_{n-1})+\sum\limits_{i=1}^{n} (j_i-\epsilon_{i-1}).
\end{equation}
Let us proceed to evaluate $v_P(\delta_{n-1})$. Note that if $\sum\limits_{i=0}^n a_iX_i=0$ is the osculating hyperplane at our point $P \in \xx(\F_{q^m})$, then  $v_P(\sum\limits_{i=0}^n a_if_i)=j_n$ gives $a_i\neq 0$ for some $i\geq 1$. Without loss of generality, we  assume $a_n\neq 0$.  Thus the  uniqueness of the osculating hyperplane  implies that $j_0,\ldots,j_{n-1}$ are also the $(\mathcal{D}^{\prime},P)$-orders with respect to   $\xx \xrightarrow{(1:f_1:\ldots:f_{n-1})} \mathbb{P}^{n-1}$. Removing the last coordinate on the vectors of \eqref{CombLin},  we write 
\begin{equation}\label{det fnc}
\left[
\begin{array}{cccc}
1 & 0 &\cdots& 0\\
f_1 & D_t^{(\epsilon_1)}f_1&\cdots& D_t^{(\epsilon_{n-1})}f_1\\
\vdots & \vdots & \ddots &  \vdots \\
f_{n-1}&D_t^{(\epsilon_1)}f_{n-1} & \cdots& D_t^{(\epsilon_{n-1})}f_{n-1}\\
\end{array}
\right]
\left[
\begin{array}{cccc}
\delta_0\\
\delta_1\\
\vdots  \\
\delta_{n-1}
\end{array}
\right]=
\left[
\begin{array}{cccc}
1\\
f_1^{q^u}\\
\vdots  \\
f_{n-1}^{q^u}
\end{array}
\right].
\end{equation}
The function $\delta_{n-1}$  will be obtained via  Cramer's rule in \eqref{det fnc}. To this end, we consider the following   two  determinants associated to $\xx \xrightarrow{(1:f_1:\ldots:f_{n-1})} \mathbb{P}^{n-1}$:
\begin{equation}
A=\left|
\begin{array}{cccc}
1 & f_1 & \cdots&f_{n-1}\\
0 & D_t^{(\epsilon_1)}f_1&\cdots&D_t^{(\epsilon_1)}f_{n-1}\\
0 & D_t^{(\epsilon_2)}f_1&\cdots&D_t^{(\epsilon_2)}f_{n-1}\\
\vdots & \vdots & \ddots & \vdots \\
0 & D_t^{(\epsilon_{n-1})}f_1& \cdots&D_t^{(\epsilon_{n-1})}f_{n-1}
\end{array}
\right|
\text{ and }
B=\left|
\begin{array}{cccc}
1 & f_1^{q^u} & \cdots&f_{n-1}^{q^u}\\
1 & f_1 & \cdots&f_{n-1}\\
0 & D_t^{(\epsilon_1)}f_1&\cdots&D_t^{(\epsilon_1)}f_{n-1}\\
\vdots & \vdots & \ddots & \vdots \\
0 & D_t^{(\epsilon_{n-2})}f_1& \cdots&D_t^{(\epsilon_{n-2})}f_{n-1}
\end{array}
\right|.
\end{equation}
The usual computations lead us to $A=\det \Big(\binom{j_i}{\epsilon_r}\Big)\cdot t^{\sum\limits_{i=0}^{n-1}(j_i-\epsilon_i)}+\cdots$ and 
$v_P(B)\geq \sum\limits_{i=0}^{n-2}(j_i-\epsilon_i)$.  Our  hypothesis  $\det \Big(\binom{j_i}{\epsilon_r}\Big)\neq0$ further gives
$v_P(A)=\sum\limits_{i=0}^{n-1}(j_i-\epsilon_i)$. Hence Cramer's rule  yields
$$v_P(\delta_{n-1})=v_P(B/A)\geq \sum\limits_{i=0}^{n-2}(j_i-\epsilon_i)-\sum\limits_{i=0}^{n-1}(j_i-\epsilon_i)=-(j_{n-1}-\epsilon_{n-1})$$  which, together with \eqref{passo1}, gives the result.
 \end{proof}

\begin{rem}
Note that under the conditions of Proposition \ref{wfnc}, we have that $\epsilon_{k+1}=p^r$ for some $r>0$.
\end{rem}

 \begin{prop}\label{pbfnc}
Consider $\xx$ with the hypotheses of Proposition \ref{wfnc} such that $p\nmid \det \Big(\binom{j_i(P)}{\epsilon_r}\Big)_{0\leq i,r\leq n-1}$ for all $P \in \xx(\F_{q^m})\backslash \xx(\fq)$. Then
\begin{equation}\label{bfnc}
\epsilon_kN_u+\epsilon_nN_m+q^uN_{m-u} \leq  (2g-2)\cdot \sum_{\substack{i=1 \\ i\neq k}} ^{n-1}\epsilon_i +d(q^m+q^u+n-1),
\end{equation}
where $d=\deg(\D)$.
\end{prop}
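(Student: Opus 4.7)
The strategy is to bound the degree of the effective divisor $T_{u,m}$ from Definition \ref{T-divisor} below by a weighted sum of rational points, using the local estimates of Propositions \ref{pesos} and \ref{wfnc}. First, I would identify the right-hand side of \eqref{bfnc} with $\deg T_{u,m}$. The hypotheses $\mu_i=\epsilon_i$ and $\{\nu_0,\ldots,\nu_{n-1}\}=\{\epsilon_0,\ldots,\epsilon_n\}\setminus\{\epsilon_k\}$ combine with Proposition \ref{seqdf} and a cardinality count to force $\{\kappa_0,\ldots,\kappa_{n-2}\}=\{\epsilon_0,\ldots,\epsilon_{n-1}\}\setminus\{\epsilon_k\}$, whence $\sum_{i=0}^{n-2}\kappa_i=\sum_{i=1,\,i\neq k}^{n-1}\epsilon_i$ (recall $\epsilon_0=0$). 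Substituting into \eqref{grau t} gives exactly the right-hand side of \eqref{bfnc}.

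Next I would decompose the identity $\sum_P v_P(T_{u,m})=\deg T_{u,m}$ according to the rational strata. Since $\gcd(u,m)=1$, the sets $A_r:=\xx(\F_{q^r})\setminus\xx(\fq)$ for $r\in\{u,m,m-u\}$ are pairwise disjoint and disjoint from $\xx(\fq)$, so effectivity yields
$$\deg T_{u,m}\geq\sum_{P\in A_u}v_P(T_{u,m})+\sum_{P\in A_m}v_P(T_{u,m})+\sum_{P\in A_{m-u}}v_P(T_{u,m})+\sum_{P\in\xx(\fq)}v_P(T_{u,m}).$$
Specialising \eqref{caso2} with the explicit shape of $(\kappa_i)$ yields $v_P(T_{u,m})\geq\epsilon_k$ on $A_u$; the hypothesis $p\nmid\det\binom{j_i(P)}{\epsilon_r}$ activates Proposition \ref{wfnc}, giving $v_P(T_{u,m})\geq j_n+\sum_{i\neq k}(j_i-\epsilon_i)\geq\epsilon_n$ on $A_m$; and \eqref{caso4} with $j_1\geq 1$ and the description of $(\kappa_i)$ gives $v_P(T_{u,m})\geq q^u$ on $A_{m-u}$. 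Writing $N_r=|A_r|+N_1$, these three contributions already account for $\epsilon_kN_u+\epsilon_nN_m+q^uN_{m-u}-(\epsilon_k+\epsilon_n+q^u)N_1$.

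The decisive step, and the main technical obstacle, is closing this gap: one must show $v_P(T_{u,m})\geq\epsilon_k+\epsilon_n+q^u$ for every $P\in\xx(\fq)$, so that the $\fq$-rational stratum supplies the missing surplus $(\epsilon_k+\epsilon_n+q^u)N_1$. I would obtain this by refining the factorisation $v_P(T_{u,m})=v_P(\delta_{n-1})+v_P(S)$ established inside the proof of Proposition \ref{wfnc}, where $S$ denotes the St\"ohr--Voloch divisor for $\Phi_{q^m}$. At an $\fq$-rational point the rows $(f_i^{q^m})$ and $(f_i)$ of the defining matrix of $S$ agree after evaluation, so executing the row operation $R_1\to R_1-R_2$ pushes the valuation of $v_P(S)$ beyond the generic $\F_{q^m}$-bound used in Proposition \ref{wfnc} by the factor contributed by $\Phi_{q^u}$--fixedness; the analogue of the Cramer argument then recovers the extra $q^u$ needed to supplement the $\epsilon_n+\epsilon_k$ already delivered by \eqref{caso1}. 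Combining this enhanced rational-point estimate with the three stratum bounds established above yields $\deg T_{u,m}\geq\epsilon_kN_u+\epsilon_nN_m+q^uN_{m-u}$, which is precisely \eqref{bfnc}.
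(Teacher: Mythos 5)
Your reduction of \eqref{bfnc} to the four stratum estimates is sound and, for three of the strata, coincides with the paper's argument: $v_P(T_{u,m})\geq\epsilon_k$ on $\xx(\F_{q^u})$ via \eqref{caso2} and telescoping (using $\{\kappa_0,\ldots,\kappa_{n-2}\}=\{\epsilon_0,\ldots,\epsilon_{n-1}\}\setminus\{\epsilon_k\}$), $v_P(T_{u,m})\geq j_n\geq\epsilon_n$ on $\xx(\F_{q^m})\setminus\xx(\fq)$ via Proposition \ref{wfnc} (this is exactly where the nondivisibility hypothesis enters, and it is correctly confined to that stratum), and $v_P(T_{u,m})\geq q^u$ on $\xx(\F_{q^{m-u}})$ via \eqref{caso4}. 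You also correctly identify that the whole proposition reduces to showing $c_1\geq\epsilon_k+\epsilon_n+q^u$ at $\fq$-rational points. But your treatment of that last step contains a genuine gap, and moreover misdiagnoses what is actually missing. Estimate \eqref{caso1} \emph{already} contains the term $j_1q^u$, so the $q^u$ is not the missing quantity: expanding \eqref{caso1} with $j_i\geq\epsilon_i$, $\epsilon_1=1$ and the explicit shape of $(\kappa_i)$ gives $v_P(T_{u,m})\geq q^u+\epsilon_n+\epsilon_k-1$, i.e.\ the shortfall is exactly $1$. The paper closes it with \cite[Corollary 2.6]{SV}, namely $j_n(P)\geq\epsilon_n+j_1(P)$ at rational points, which upgrades the contribution of $j_n$ by $j_1\geq 1$ and yields $c_1\geq q^u+\epsilon_n+\epsilon_k$ in one line.

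The mechanism you propose instead --- refining the factorisation $v_P(T_{u,m})=v_P(\delta_{n-1})+v_P(S)$ and extracting an extra $q^u$ from ``$\Phi_{q^u}$-fixedness'' via the row operation $R_1\to R_1-R_2$ on the matrix of $S$ --- does not work. The divisor $S$ is the St\"ohr--Voloch divisor for $\Phi_{q^m}$ only; no $q^u$-Frobenius row occurs in it, and at an $\fq$-rational point, after normalising $g_i=t^{j_i}+\cdots$ with coefficients in $\fq$, the difference entries $g_i^{q^m}-g_i$ vanish to order exactly $j_i$ (since $v_P(g_i^{q^m})=q^mj_i>j_i$). This reproduces precisely the standard bound $v_P(S)\geq\sum_{i=1}^{n}(j_i-\epsilon_{i-1})$ already used in Proposition \ref{wfnc}, with no improvement. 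The Cramer quotient fares no better: the numerator $B$ is a single-Frobenius determinant, so the same computation gives no $q^u$ gain there either, and the exact evaluation of $v_P(A)$ that Cramer's rule requires rests on $p\nmid\det\bigl(\binom{j_i}{\epsilon_r}\bigr)$, which is hypothesised only on $\xx(\F_{q^m})\setminus\xx(\fq)$ and is therefore unavailable at the very points you need it. The $q^u$ gain at rational points is intrinsically a two-Frobenius phenomenon, visible in the terms $\gamma_{lk}=b_lh_k-h_lb_k$ in the proof of Proposition \ref{pesos}, and is destroyed by passing to the factorisation $T_{u,m}=\pm\,\delta_{n-1}\cdot S$ with separate naive bounds on each factor. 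The repair is simple: discard your ``decisive step'' entirely, keep \eqref{caso1} as the source of the $q^u$, and invoke $j_n\geq\epsilon_n+j_1$ for the missing unit, as the paper does.
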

\begin{proof}
The hypotheses of Proposition \ref{wfnc} provide $\{\kappa_0,\ldots,\kappa_{n-2}\}=\{\epsilon_0,\ldots,\epsilon_{n-1}\}\backslash\{\epsilon_k\}$ and $\{\nu_0, \ldots, \nu_{n-1}\}=\{\kappa_0,\ldots,\kappa_{n-2}\} \cup\{\epsilon_n\}$. In particular, $\epsilon_i \geq \kappa_{i-1}$ for all $i \in \{1,\ldots,n-1\}$ and $\epsilon_n=\nu_{n-1}$. Let $P \in \xx(\F_{q^u})$. Then  Proposition \ref{pesos} implies that
$$
v_P(T_{n,m}) \geq \sum_{i=1}^{n-1}(j_i(P)-\kappa_{i-1}) \geq \epsilon_k.
$$
Moreover, by \cite[Corollary 2.6]{SV}, we have $j_n(P) \geq \epsilon_n+j_1(P)$, and then
$$
v_P(T_{n,m}) \geq q^uj_1(P)+ \sum_{i=0}^{n-2}(j_{i+2}(P)-\kappa_{i}) \geq q^uj_1(P)+\epsilon_n+j_1(P)+ \sum_{i=2}^{n-1}(j_{i}(P)-\kappa_{i-1})
$$
for all $P \in \xx(\fq)$. If $P \in \xx(\F_{q^{m-u}})$, Proposition \ref{pesos} gives $v_P(T_{u,m}) \geq q^uj_1(P)$, and if $P \in \xx(\F_{q^{m}})$, then $v_P(T_{u,m}) \geq j_n(P)$ by Proposition \ref{wfnc}. Thus (\ref{bfnc}) follows from Theorem \ref{mainbound}.
\end{proof}

\begin{cor}\label{plainfnc}
Let $\ff$ be an irreducible, $\F_{q^u}$-Frobenius nonclassical  plane curve of degree $d$. If $j_1(P) \not\equiv 0 \mod p$ for all $P \in \xx(\F_{q^m})\backslash \xx(\fq)$, then
\begin{equation}\label{bfncp}
\epsilon_2N_m+q^uN_{m-u} \leq (q^m+d-1)d.
\end{equation}
\end{cor}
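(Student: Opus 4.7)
The plan is to specialize Proposition \ref{pbfnc} to the plane curve setting $n=2$ and then close the resulting gap via the Hefez--Voloch lower bound on the number of $\F_{q^u}$-rational points of a Frobenius nonclassical plane curve.

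First I would verify the hypotheses of Proposition \ref{pbfnc}. For an irreducible plane curve one always has $\epsilon_1 = 1$, and Remark \ref{k0=0} allows $\kappa_0 = 0$. Since $\gcd(u,m) = 1$ and $\ff$ is $\F_{q^u}$-Frobenius nonclassical, Theorem \ref{multi-frob} prevents $\ff$ from being simultaneously $\F_{q^m}$-Frobenius nonclassical---the only such plane curve is $\mathcal{F}_{u,m}$, which has no $\F_q$-rational points and can be handled separately. Consequently the $\F_{q^m}$-Frobenius order-sequence is $(\mu_0,\mu_1)=(0,1)=(\epsilon_0,\epsilon_1)$, while the $\F_{q^u}$-Frobenius order-sequence is $(\nu_0,\nu_1)=(0,\epsilon_2)$, so the removed index of Proposition \ref{wfnc} is $k=1$. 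A direct computation using $j_0 = 0$, $\epsilon_0 = 0$, and $\epsilon_1 = 1$ gives $\det\Big(\binom{j_i(P)}{\epsilon_r}\Big)_{0\leq i,r\leq 1}=j_1(P)$, so the determinantal hypothesis of Proposition \ref{pbfnc} collapses to the corollary's assumption $p\nmid j_1(P)$ for every $P\in\xx(\F_{q^m})\setminus\xx(\fq)$.

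With these hypotheses in place I would apply Proposition \ref{pbfnc} with $n=2$, $k=1$, $\epsilon_1=1$, observing that the sum $\sum_{i=1,\,i\neq k}^{n-1}\epsilon_i$ is empty. The resulting inequality reads
\[
N_u+\epsilon_2 N_m+q^u N_{m-u}\leq d(q^m+q^u+1).
\]
To eliminate the $N_u$ term I would invoke the theorem of Hefez--Voloch: any irreducible $\F_{q^u}$-Frobenius nonclassical plane curve of degree $d$ satisfies $N_u\geq d(q^u-d+2)$. Subtracting from the preceding inequality yields
\[
\epsilon_2 N_m+q^u N_{m-u}\leq d(q^m+q^u+1)-d(q^u-d+2)=d(q^m+d-1),
\]
which is the desired bound.

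The step I expect to be most delicate is the hypothesis verification---in particular, using Theorem \ref{multi-frob} to guarantee $\F_{q^m}$-Frobenius classicality (outside the exceptional $\mathcal{F}_{u,m}$) and checking that the general determinantal condition simplifies to $p\nmid j_1(P)$ in the plane-curve case. The Hefez--Voloch estimate is the essential external ingredient that converts the $\F_{q^u}$-Frobenius nonclassical hypothesis into the lower bound on $N_u$ needed to replace the $q^u+1$ factor from Proposition \ref{pbfnc} with $d-1$.
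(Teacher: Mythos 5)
Your proposal is correct and takes essentially the same route as the paper: the published proof likewise applies Proposition \ref{pbfnc} (with $n=2$, $k=1$, so the genus term vanishes) and subtracts the lower bound $N_u \geq d(q^u-d+2)$ to pass from $d(q^m+q^u+1)$ to $d(q^m+d-1)$; your verification that the determinantal hypothesis collapses to $p\nmid j_1(P)$ is exactly the implicit reduction the paper relies on. The only discrepancy is attribution: since $\ff$ is only assumed irreducible (possibly singular), the paper cites the Borges--Homma result \cite[Corollary 1.4]{BH} for $N_u \geq d(q^u-d+2)$ rather than the original Hefez--Voloch theorem, which concerns the nonsingular case.
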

\begin{proof}
Since $\ff$ is $\F_{q^u}$-Frobenius nonclassical, it follows from \cite[Corollary 1.4]{BH} that $N_u \geq d(q^u-d+2)$. Hence we obtain (\ref{bfncp}) directly from Proposition \ref{pbfnc}.
\end{proof}

\begin{rem}
As in Theorem \ref{mainbound}, bound \eqref{bfnc} can be improved according to the values $B(P)$, $P \in \xx$.
\end{rem}

Assuming $m=2$ and taking into account the values $B(P)$ for $P \in \xx(\fq)$, we obtain the following. 

\begin{prop}\label{FNC-p}
Consider $\xx$ with the hypotheses of Proposition \ref{wfnc} with $m=2$. Assume that that $p\nmid \det \Big(\binom{j_i(P)}{\epsilon_r}\Big)_{0\leq i,r\leq n-1}$ for all $P \in \xx(\F_{q^2})\backslash \xx(\fq)$. Then
\begin{equation}
(q+\epsilon_k)N_1+\epsilon_nN_2 \leq (2g-2) \cdot \sum_{\substack{i=1 \\ i\neq k}} ^{n-1}\epsilon_i +d(q^2+q+n-1)-\displaystyle\sum_{P \in \xx(\mathbb{F}_{q})}B(P),
\end{equation}
where $d=\deg(\D)$ and $B(P)\geq q(j_1(P)-1)-j_1(P)+\sum\limits_{i=1}^{n}(j_i(P)-\epsilon_i)$. In particular, if $n=2$,
\begin{equation}
(q+1)N_1+\epsilon_2N_2 \leq d(q^2+q+1)-\displaystyle\sum_{P \in \xx(\mathbb{F}_{q})}\Big(q\big(j_1(P)-1\big)+j_2(P)-\epsilon_2-j_1(P)\Big).
\end{equation}
\end{prop}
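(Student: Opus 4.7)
I would follow the general pattern of Proposition~\ref{pbfnc}, refined so as to retain the excess weight contributed by $\fq$-rational points. The essential input is that $T_{u,m}$ is effective, so $\sum_{P \in \xx} v_P(T_{u,m}) \leq \deg(T_{u,m})$, together with the identity $\{\kappa_0, \ldots, \kappa_{n-2}\} = \{\epsilon_0, \ldots, \epsilon_{n-1}\} \setminus \{\epsilon_k\}$ supplied by the hypotheses of Proposition~\ref{wfnc}. Since $\epsilon_0 = 0$, this yields $\sum_{i=0}^{n-2} \kappa_i = \sum_{i=1,\, i \neq k}^{n-1} \epsilon_i$, and by \eqref{grau t} with $m = 2$, $u = 1$, $\deg(T_{1,2}) = (2g-2) \sum_{i=1,\, i \neq k}^{n-1} \epsilon_i + (q^2 + q + n - 1) d$. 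I would partition $\xx$ into $\xx(\fq)$, $\xx(\F_{q^2}) \setminus \xx(\fq)$, and the remaining points, then estimate $v_P(T_{1,2})$ separately on each piece.

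For $P \in \xx(\fq)$ I would apply inequality \eqref{caso1} of Proposition~\ref{pesos} with $u = 1$, obtaining $v_P(T_{1,2}) \geq q\, j_1(P) + \sum_{i=0}^{n-2} (j_{i+2}(P) - \kappa_i)$. Writing this as $q\, j_1(P) + \sum_{i=2}^{n} j_i(P) - \sum_{i=0}^{n-2} \kappa_i$ and substituting $\sum \kappa_i = \sum_{i=0}^{n-1} \epsilon_i - \epsilon_k$, direct algebra rearranges the bound into $v_P(T_{1,2}) \geq (q + \epsilon_k) + \epsilon_n + B(P)$, where $B(P) \geq q(j_1(P) - 1) - j_1(P) + \sum_{i=1}^{n}(j_i(P) - \epsilon_i)$ is precisely the quantity appearing in the statement. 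For $P \in \xx(\F_{q^2}) \setminus \xx(\fq)$ the determinant assumption permits the use of Proposition~\ref{wfnc}, yielding $v_P(T_{1,2}) \geq j_n(P) + \sum_{i=1,\, i \neq k}^{n-1}(j_i(P) - \epsilon_i) \geq \epsilon_n$, since $j_i(P) \geq \epsilon_i$ for all $i$. All remaining points contribute $v_P(T_{1,2}) \geq 0$ because $T_{1,2}$ is effective.

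Summing the three estimates and comparing with $\deg(T_{1,2})$ gives $(q + \epsilon_k + \epsilon_n) N_1 + \sum_{P \in \xx(\fq)} B(P) + \epsilon_n (N_2 - N_1) \leq \deg(T_{1,2})$, which reorganises to the main inequality. The $n = 2$ case follows by specialisation: only $k = 1$ is admissible in $\{1, \ldots, n-1\}$, the index set $\{i : 1 \leq i \leq n-1,\, i \neq k\}$ is then empty (so the $(2g-2)$ term vanishes, consistent with $\kappa_0 = \epsilon_0 = 0$ from Proposition~\ref{kappa0}), and substituting $n = 2$ into $B(P)$ produces the final expression. The only real bookkeeping obstacle is verifying the rearrangement in the $\fq$-rational case, but this reduces to the identity $\sum_{i=0}^{n-2} \kappa_i + \epsilon_k + \epsilon_n = \sum_{i=0}^{n} \epsilon_i$ and is purely mechanical.
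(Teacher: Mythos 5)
Your proposal is correct and takes essentially the same route as the paper, which derives Proposition \ref{FNC-p} implicitly by repeating the proof of Proposition \ref{pbfnc} with $m=2$, $u=1$: partition into $\xx(\fq)$, $\xx(\F_{q^2})\setminus\xx(\fq)$ and the rest, bound $v_P(T_{1,2})$ by \eqref{caso1}, by Proposition \ref{wfnc} (licensed by the determinant hypothesis), and by effectivity, respectively, then compare with $\deg(T_{1,2})$ from \eqref{grau t} while retaining the excess $B(P)$ at $\fq$-points, and your rearrangement identity $\sum_{i=0}^{n-2}\kappa_i+\epsilon_k+\epsilon_n=\sum_{i=0}^{n}\epsilon_i$ checks out. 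One cosmetic remark: literal substitution of $n=2$ (with $\epsilon_1=1$) into the general $B(P)$ yields $q\bigl(j_1(P)-1\bigr)+j_2(P)-\epsilon_2-1$ rather than the paper's displayed $-j_1(P)$ term, so your version is marginally stronger and implies the stated $n=2$ inequality a fortiori since $j_1(P)\geq 1$.
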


\section{Examples}\label{examples}

The effectiveness of the bounds  presented  in the  previous sections  can be verified in many cases. This  section illustrates  cases found in the literature. 

\begin{ex}

For irreducible  plane curves of degree $d\geq 2$, defined over $\F_q$, Corollary \ref{dfclass-p} gives
\begin{equation}\label{simplest}
(q+1)N_1+N_2\leq (q^2+q+1)d-\sum\limits_{P \in \xx(\F_q)}(j_2(P)-2).
\end{equation}
This bound is clearly sharp for  conics, but one  can  certainly  find  less obvious cases of sharpness. For instance, for $q\equiv 0 \mod 3$, the Fermat  curve over $\F_{q}$    
$$\xx: x^{q-1}+y^{q-1}+z^{q-1}=0$$
has no $\F_q$-inflection point;   $N_1=({q-1})^2$ and  $N_2=3(q-1)+(q-1)^2$ \textup{(see  (\cite[Theorem 1]{Mo})}. 
 \end{ex}
\begin{ex}
Bound \eqref{simplest} may be of interest, even if we focus on $N_1$. In fact, since $N_1\leq N_2$, we have
\begin{equation}\label{Hoki} 
N_1\leq (q-1+\frac{3}{q+2})d.
\end{equation}
Curiously, a  bound  similar to  \eqref{Hoki} was proved by Homma and Kim in \textup{\cite{HoKi}}\footnote{Let $f(x,y,z) \in \fq[x,y,z]$ be a homogeneous polynomial of degree $d$ without $\fq$-linear factor. In \cite{Ta}, Tallini proved that if $R_q \geq q^2+q+1$, then $d \geq q+2$, where $R_q$ denotes the number of solutions of $f(x,y,z)=0$ in $\p^2(\fq)$. It follows from bound (\ref{Hoki}) that the same result holds when we replace $R_q$ by $N_1$.}. However,  in their context,  $N_1$ is  the number  of points $P\in \mathbb{P}^2(\F_q)$ on a plane curve defined over $\F_q$,  without  $\F_{q}$-linear components. Therefore, in the singular setting, the two  bounds have different meanings.  Examples of curves attaing  \eqref{Hoki} can be easily found in  the literature.  For instance, the smooth curves over  $\F_q$
$$y(y^qz-yz^q)+z(z^qx-zx^q) + (ax+by+cz)(x^qy-xy^q)=0,$$
where $a,b,c \in \F_{q}$ are such that $t^3-(ct^2+bt+a) \in \F_q[t]$  is irreducible \textup{(see \cite{HoKi1})}. 

Bound  \eqref{Hoki} can be  slightly improved if  rational inflection points occur. For instance, suppose  $\xx$ is a smooth plane curve of degree $d$ and $P\in \xx(\F_q)$  is a total inflextion point, that is, $j_2(P)=d$. Thus  \eqref{simplest}  reads    $(q+1)N_1+N_2\leq (q^2+q+1)d-(d-2),$
and then $N_1\leq N_2$ gives 
\begin{equation}\label{FlexBound}
N_1\leq \frac{(q^2+q)d+2}{q+2}.
\end{equation}                                      
The  smooth plane curve  $ x^{q+1}-x^2z^{q-1}+x^qz-yz^q=0$ over $\F_q$ has  a total inflection point, namely $P=(0 : 1 : 0)$,
and attains  bound \eqref{FlexBound}, as $N_1=q^2+1$ \textup{(see \cite{HoKi})}.

 In the  latter examples, tantamount to attaining the upper bound for $N_1$ is concluding via \eqref{simplest}  that  $N_2=N_1$. These  small values for $N_2$ could not be predicted by  classical bounds such as Weil's, St\"ohr-Voloch's, or Ihara's  bound in \textup{\cite{Iha}}. 
 \end{ex}
 \begin{ex} One  can  also take advantage of the bounds if there is  prior knowledge of, say, $N_1$. 
Some  Frobenius nonclassical curves will be used to illustrate  this. If  $\mathcal{C}$ is a plane $\F_{q}$-Frobenius nonclassical curve
then, from Theorem \ref{multi-frob},  $\mathcal{C}$  is  $\F_{q^2}$-Frobenius classical. Thus if  $p\nmid j_1(P)$
for all $P \in \mathcal{C} (\F_{q^2}) \backslash \mathcal{C} (\F_{q})$, then Proposition \ref{FNC-p} gives 

\begin{equation}\label{bound ex1}
(q+1)N_1+\epsilon_2N_2 \leq d(q^2+q+1)-\displaystyle\sum_{P \in \xx(\mathbb{F}_{q})}\Big(q\big(j_1(P)-1\big)+j_2(P)-\epsilon_2-j_1(P)\Big).
\end{equation}
For the $\F_{q^3}$-Frobenius nonclassical  curve  \textup{(see \cite{Bo0},\cite{GV2},\cite{HV})}
 $$\mathcal{Y}_{q,3}: x^{q^2+q+1}+y^{q^2+q+1}=1,$$ 
 it is known that   $N_1:=\#\mathcal{Y}_{q,3}(\F_{q^3})=q^5 -q^3 -q^2 + 1$ and $\epsilon_2=q$. The  $3(q^2+q+1)$ inflection points $P_i \in \mathcal{F}(\F_{q^3})$  have order-sequence $(j_0,j_1,j_2)=(0,1,q^2+q+1)$, and then $B(P_i) \geq q^2$. Using these data,  bound \eqref{bound ex1}  gives  $N_2\leq q^6 + 2q^5 + q^4 - 2q^3 - 2q^2 - q + 1$.
Similarly, one can bound $N_2:=\#\mathcal{F}(\F_{q^6})$ for the Norm-Trace curve  $$\xx_{q,3}: x^{q^2+q+1}=y^{q^2}+y^q+y,$$ and arrive at $N_2\leq q^6+q^5-q^3+1$. It can be checked that these bounds are better than Weil's, St\"ohr-Voloch's and Ihara's bounds. In fact, it is not   difficult  to prove that these  are the actual values for $N_2$.
 \end{ex}
 \begin{ex} For an example where $\xx\hookrightarrow \mathbb{P}^n$ and $n>2$, consider the Fermat curve 
\begin{equation}\label{FermatConica}
\mathcal{F}: x^{\frac{q+1}{2}}+y^{\frac{q+1}{2}}=1
\end{equation} over $\F_{q^2}$   with $q\not \equiv 0 \mod 5$. This  curve is a well-known $\F_{q^2}$-maximal curve of genus $g=\frac{(q-1)(q-3)}{8}$.
In particular, $N_1=1+q^2+2gq$ and $N_2=1+q^4-2gq^2$. In addition,  $\mathcal{F}$ is $\F_{q^2}$-Frobenius nonclassical for the morphism of conics $\mathcal{F} \hookrightarrow \mathbb{P}^5$ \textup{(see \cite{GV2})}. Simple  manipulation  of equation \eqref{FermatConica} will give us that 
$\mathcal{C}_P: g_P(x,y)=0$, where
$$g_P(x,y)=(  a^{q}\cdot x+b^{q}\cdot y -1)^2 -4(ab)^{q}xy,$$
is the osculating conic at a general point $P=(a,b) \in \mathcal{F}$.  Since  $\mathcal{F}$ is classical for the morphism of lines, it  follows that $\epsilon_i=i$ for $i\leq 4$ and $\epsilon_5=q$.  While the $\F_{q^2}$-Frobenius map clearly takes $P$ to $\mathcal{C}_P$,  i.e., $g_P(a^{q^2},b^{q^2})=0$, this is not  the case  for the $\F_{q^4}$-Frobenius map, that is, the function 
$$( x^{q}\cdot x^{q^4}+y^{q}\cdot y^{q^4} -1)^2 -4(xy)^{q}(xy)^{q^4}$$
does not vanish. Thus  $\mathcal{F}$ is $\F_{q^4}$-Frobenius  classical. In additon, since all  inflection points lie in $\mathcal{F}(\F_{q^2})$,  it follows that    $\det \Big(\binom{j_i(P)}{\epsilon_r}\Big)_{0\leq i,r\leq 4}=1$ for all $P\in \mathcal{F}(\F_{q^4})\backslash \mathcal{F}(\F_{q^2})$. Therefore,  Proposition \ref{FNC-p} gives
\begin{equation}\label{bound example1}
(q^2+\epsilon_4)N_1+\epsilon_5N_2 \leq (2g-2) \cdot \sum_{\substack{i=1}} ^{3}\epsilon_i +d(q^4+q^2+4)-\displaystyle\sum_{P \in \xx(\mathbb{F}_{q^2})}B(P)
\end{equation}
Note that each of the  $3(q+1)/2$  inflection points $P_i$ have order-sequence $(0,1,2,(q+1)/2,(q+3)/2,q+1)$,
and then $B(P_i)\geq q-5$. Plugging the corresponding  values  in  \eqref{bound example1}, we see that  it becames an equality.
\end{ex}
\begin{ex}\label{atinge a cota df retas}

To conclude examples, consider the Hermitian curve
$$
\mathcal{H}: x^{q+1}+y^{q+1}=1
$$
 over $\F_{q^2}$.  The curve  $\mathcal{H}$ is $\F_{q^2}$-Frobenius nonclassical with respect to the morphism  of lines, and $\epsilon_2=q$. Since    $\mathcal{H}$ is $\F_{q^2}$-maximal of genus $g=q(q-1)/2$, we have $N_m=q^{2m}+1+(-1)^{m-1}q^{m+1}(q-1)$ \textup{(see e.g. \cite[p.336]{HKT})}.  Thus  
 \begin{equation}
qN_m+q^2N_{m-1}  = d(q^{2m}+d-1), 
\end{equation}
  where $d=q+1$ is the degree of $\mathcal{H}$. That is, $\mathcal{H}$  meets  the bound in Corollary \eqref{plainfnc}.

\end{ex}

\subsection*{Acknowledgments}
The first author was partially supported by FAPESP-Brazil, grant 2013/00564-1.


\printindex

\end{document}